\let\ORIlabel\label
\let\ORIrefstepcounter\refstepcounter
\AddToHook{package/hyperref/before}{
  \let\label\ORIlabel
  \let\refstepcounter\ORIrefstepcounter
}

\documentclass[final]{siamart220329}

\usepackage{graphicx}%
\usepackage{amsmath,amssymb,amsfonts}%
\usepackage{xcolor}%
\usepackage{algorithm}
\usepackage{algpseudocode}%
\usepackage{hyperref}

\usepackage{mystylefile}
\headers{Preconditioned Normal Equations in Mixed Precision}{J. E. Garrison and I.C.F. Ipsen}
\title{Perturbation Analysis for Preconditioned\\ Normal Equations in Mixed Precision}

\author{James E.\ Garrison\thanks{Department of Mathematics, North Carolina State University,
NC 27695-8205, USA
  (\email{jegarri3@ncsu.edu}).}
\and Ilse C.F.\ Ipsen\thanks{Department of Mathematics, North Carolina State University,
NC 27695-8205, USA
  (\email{ipsen@ncsu.edu}, \url{https://ipsen.math.ncsu.edu/}).}}

\usepackage{amsopn}

\begin{document}
\maketitle

\begin{abstract}
For real matrices of full column-rank, 
we analyze the conditioning of several types of normal equations that are preconditioned by a randomized preconditioner computed in lower precision. These include 
symmetrically preconditioned normal equations,
half-preconditioned normal equations, seminormal equations and not-normal equations.
 Our perturbation bounds are realistic and informative, and suggest that the conditioning depends only mildly on the quality of the preconditioner; however, it
does depend on the size of the least squares residual
-- even if the normal equations do not originate from a least squares problem.
We illustrate that a randomized preconditioner can deliver
a solution accuracy comparable to that of  Matlab’s mldivide command, is efficient in practice, and 
well-suited to GPU implementations. For the computation 
of the preconditioner,
we propose an automatic selection of the precision, based on a fast condition
number estimation in lower precision.
\end{abstract}

\begin{MSCcodes}
65F45, 65F20, 15A12
\end{MSCcodes}

\section{Introduction}

Given a matrix $\ma\in\rmn$ with $\rank(\ma)=n$, and $\vb\in\real^m$, we consider
the conditioning of the normal equations 
\begin{equation}\label{e_ne1}
\ma^T\ma\vx=\ma^T\vb.
\end{equation}
Although normal equations represent a simple and effective solution of
least squares problems
\begin{equation}\label{e_ls}
\|\ma\vx-\vb\|_2=
\min_{\vx}{\|\ma\vx-\vb\|_2},
\end{equation}
they can be illconditioned and are not recommended in many applications~\cite[Section 5.3.7]{GovL13}. 
For instance, once the condition number of $\ma$ with respect to left inversion exceeds $10^7$,
then $\ma^T\ma$ is numerically singular in IEEE double precision.
Hence, for the normal equations to be well conditioned and safe to use, the condition number of $\ma$ must be small,
ideally close to~1. 
  
We analyze two preconditioning approaches from \cite{ipsen2025solutionsquaresproblemsrandomized}
for improving the conditioning of the normal equations, but now with the preconditioner computed in lower precision.

\paragraph{Preconditioned Normal Equations  (PNE). }
Precondition the matrix $\ma$  with a randomized preconditioner $\mr_s$ so that the preconditioned matrix 
$\ma_p\equiv \ma\mr_s^{-1}$ is well conditioned
with high probability. Then solve the preconditioned normal equations and recover the original solution,
\begin{equation}
\begin{split}
\ma_p^T\ma_p\vy&=\ma_p^T\vb\\\label{e_pne}
\mr_s\vx&=\vy.
\end{split}
\end{equation}

\paragraph{Half Preconditioned Normal Equations (HPNE).}
Dispense with the triangular system solution by preconditioning only the left instances of $\ma$,
\begin{align}\label{e_hpne}
\ma_p^T\ma\vx=\ma_p^T\vb.
\end{align}
Since $\ma_p^T\ma$ is non-symmetric, the linear system 
has to be solved by an LU factorization
with partial pivoting, a QR factorization, or an iterative solver. 
The HPNE~(\ref{e_hpne}) represent a special case of the not-normal equations~\cite{Wathennotnormal},
\begin{equation*}
    \mb^T\ma\vx=\mb^T\vb,
\end{equation*}
where $\mb\in\rmn$ is a full column rank matrix with the same column space as $\ma$. 

The perturbation analysis in~\cite{ipsen2025solutionsquaresproblemsrandomized} suggests that PNE and HPNE, with an effective preconditioner so that $\kappa(\ma_p)\lesssim10$, are as well conditioned as $\ma$, and 
their solution can be almost as accurate as that from the Matlab backslash ($\verb|mldivide|$) command. However, the bounds
from~\cite{ipsen2025solutionsquaresproblemsrandomized}
are not informative if the preconditioner is computed in a lower precision. In contrast, our new bounds for PNE and HPNE in Sections~\ref{s_PNE} and~\ref{s_HPNE} are realistic, as illustrated in Figure~\ref{fig:HPNEbadbound}.


\begin{figure}
    \centering
    \includegraphics[width=0.48\linewidth]{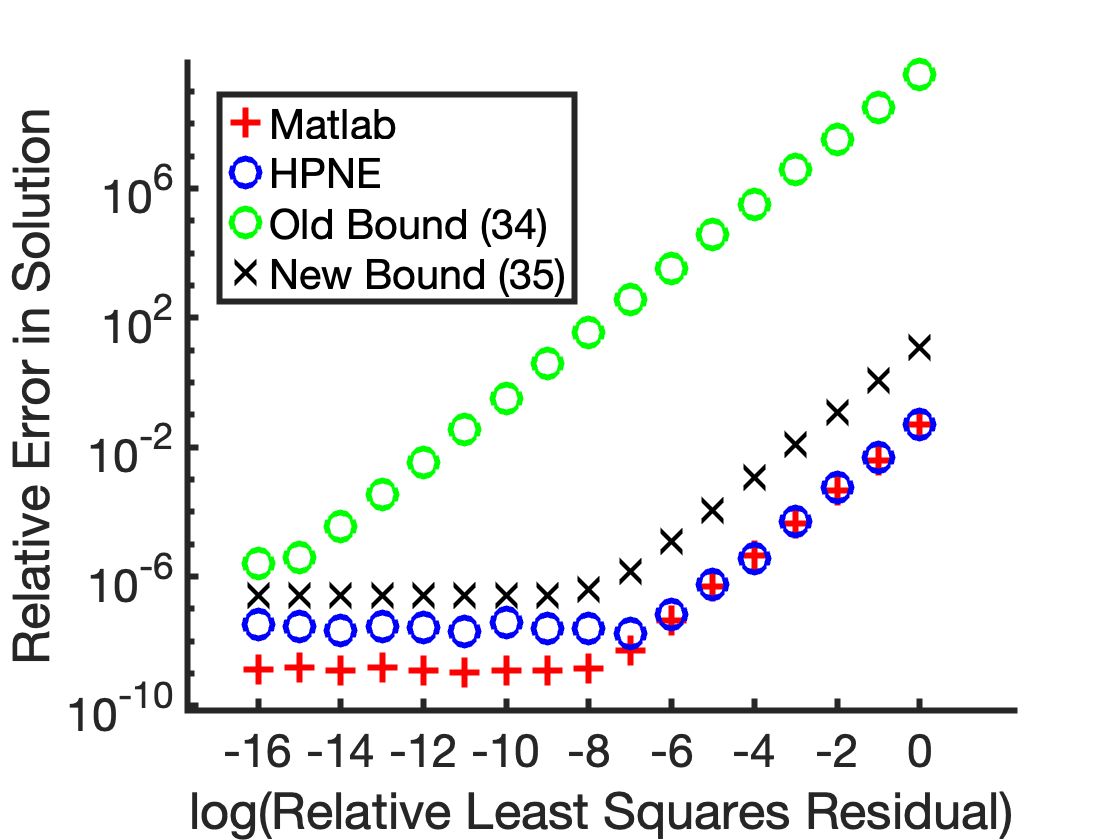}
    \caption{Relative errors $\|\vhx-\vx_*\|/\|\vhx\|$ in the computed solutions $\vhx$ 
and perturbation bounds versus logarithm of relative 
least squares residuals $\|\vb-\ma\vx_*\|/(\|\ma\|\|\vx_*\|)$ for 
$\ma\in\real^{1,000\times 100}$ with condition number $\kappa(\ma)=10^8$.
Shown are 
the Matlab backslash solutions (red plusses); the HPNE solutions with a single precision preconditioner (blue squares); the generalized version 
\eqref{eq42}
of
the perturbation bound \cite[Theorem 3.1]{ipsen2025solutionsquaresproblemsrandomized} (green circles); and our new bound~\eqref{eq32} (black x's). } 
    \label{fig:HPNEbadbound}
\end{figure}

\subsection{Contributions and Overview}

 We present perturbation bounds for PNE and HPNE when the preconditioner is computed in lower precision, and illustrate the potential for faster
 speed.

\begin{enumerate}
\item Our perturbation bounds are realistic and informative for both PNE (Theorem~\ref{t_55}
in Section~\ref{PNE_new}) and HPNE (Theorem~\ref{t_22} in Section~\ref{HPNE_new}), and imply
that their conditioning depends only very weakly on the preconditioner~$\mr_s$.
    \item The mixed precision solution of PNE and HPNE by direct methods on NVIDIA H100 GPUs shows potential for speedups over established direct methods (Section~\ref{exp_HPC}). 
   
    \item Our perturbation bound for the seminormal equations (Section~\ref{s_sne}) implies that they are no better conditioned  than the normal equations.
    \item Like for PNE and HPNE, the perturbation bound for the not-normal equations (Section~\ref{s_nne}) depends on 
    the least squares residual and implies that they are well conditioned when they are close to the HPNE.
\item We propose an automatic selection of the precision level, based on a fast condition
number estimation in the lower precision (Algorithm~\ref{alg_5}
in Section~\ref{s_lpp}).
\end{enumerate}

\paragraph{Overview.} After defining notation (Section~\ref{s_notation}), we review existing work (Section~\ref{s_exist}), followed by perturbation bounds for the PNE (Section~\ref{s_PNE}) and the HPNE (Section~\ref{s_HPNE}), as well as for the seminormal and not-normal equations (Section~\ref{s_var}). We present the randomized preconditioner in lower precision (Section~\ref{s_lpp}) and end with  numerical experiments (Section~\ref{s_num}).

\subsection{Notation}\label{s_notation}
 For a matrix $\ma\in\rmn$ with 
$\rank(\ma)=n$, the Moore-Penrose inverse is $\ma^{\dagger}\equiv(\ma^T\ma)^{-1}\ma^T$, and 
the two-norm condition number with respect to left inversion is 
$\kappa(\ma)\equiv\|\ma\|\|\ma^{\dagger}\|$, where $\|\cdot\|$ denotes the Euclidean two-norm.

To put subsequent bounds in context, we review perturbation bounds for least squares problems~(\ref{e_ls}), and the normal equations~(\ref{e_ne1}).

\begin{lemma}[Fact 5.14 in \cite{IIbook}]\label{l_ls}
Let $\ma, \ma+\me\in\rmn$ with $\rank(\ma)=\rank(\ma+\me)=n$ and 
$\epsilon_A\equiv \|\me\|/\|\ma\|$. Let $\vx_*$ be the solution 
to $\min_{\vx}{\|\ma\vx-\vb\|}$ and $\vhx\neq \vzero$ the solution to $\min_{\vx}{\|(\ma+\me)\vx-\vb\|}$.
Then
\begin{align*}
\frac{\|\vhx-\vx_*\|}{\|\vhx\|}\leq \kappa(\ma)\,\epsilon_A\, 
\left(1+\kappa(\ma)\frac{\|\vb-(\ma+\me)\vhx\|}{\|\ma\|\|\vhx\|}\right).
\end{align*}
\end{lemma}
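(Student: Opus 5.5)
The plan is to compare the two normal-equation characterizations and express the difference through the unperturbed pseudoinverse $\ma^{\dagger}$. The residual is taken with respect to the perturbed problem, $\hat{\vr}\equiv\vb-(\ma+\me)\vhx$; this matches the residual appearing in the bound.

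First I record the two optimality conditions. Since $\vx_*$ solves the least squares problem, $\vx_*=\ma^{\dagger}\vb$. Since $\vhx$ solves the perturbed problem, $(\ma+\me)^T\hat{\vr}=\vzero$. Then I write
\begin{align*}
\ma\vhx=(\ma+\me)\vhx-\me\vhx=\vb-\hat{\vr}-\me\vhx .
\end{align*}
Multiplying by $\ma^{\dagger}$ and using $\ma^{\dagger}\ma=\mi$ (full column rank) gives
\begin{align*}
\vhx-\vx_*=-\ma^{\dagger}\me\vhx-\ma^{\dagger}\hat{\vr}.
\end{align*}

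The first term is routine: $\|\ma^{\dagger}\me\vhx\|\le\|\ma^{\dagger}\|\|\me\|\|\vhx\|=\kappa(\ma)\,\epsilon_A\,\|\vhx\|$. This produces the leading term of the bound after dividing by $\|\vhx\|$.

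The second term is the main obstacle. A naive estimate would give $\|\ma^{\dagger}\|\|\hat{\vr}\|$, which lacks the factor $\epsilon_A$. The fix is to use orthogonality. Write $\ma^{\dagger}=(\ma^T\ma)^{-1}\ma^T$ and $\ma^T=(\ma+\me)^T-\me^T$. Because $(\ma+\me)^T\hat{\vr}=\vzero$, we get $\ma^T\hat{\vr}=-\me^T\hat{\vr}$. Hence
\begin{align*}
\ma^{\dagger}\hat{\vr}=-(\ma^T\ma)^{-1}\me^T\hat{\vr}.
\end{align*}
Since $\|(\ma^T\ma)^{-1}\|=\|\ma^{\dagger}\|^2$, this yields
\begin{align*}
\|\ma^{\dagger}\hat{\vr}\|\le\|\ma^{\dagger}\|^2\|\me\|\|\hat{\vr}\|=\kappa(\ma)^2\,\epsilon_A\,\frac{\|\hat{\vr}\|}{\|\ma\|}.
\end{align*}

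Finally, I combine the two estimates with the triangle inequality and divide by $\|\vhx\|\neq 0$. This gives $\kappa(\ma)\epsilon_A\bigl(1+\kappa(\ma)\|\hat{\vr}\|/(\|\ma\|\|\vhx\|)\bigr)$, which is exactly the claimed bound. The rank assumption on $\ma+\me$ is used only to ensure that $\vhx$ is unique and satisfies the perturbed normal equations.
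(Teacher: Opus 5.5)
Your proof is correct and complete. The paper gives no proof of its own and simply cites Fact 5.14 of \cite{IIbook}. Your key step is to use $(\ma+\me)^T\hat{\vr}=\vzero$ to replace $\ma^T\hat{\vr}$ by $-\me^T\hat{\vr}$ before applying $(\ma^T\ma)^{-1}$. This is the standard argument, and it is the same mechanism the paper uses in its own proofs of Theorems~\ref{t_55}, \ref{t_22} and~\ref{l_notnorm}.
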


We do not consider perturbations in the right hand side $\vb$, because perturbations in $\ma$ tend to be more influential on the sensitivity of the least squares problems.

The following bound for the normal equations makes no assumptions on the perturbation $\me$, so that $\ma+\me$ can be rank deficient.

\begin{lemma}[Lemma A.1 in \cite{ipsen2025solutionsquaresproblemsrandomized}]\label{l_ne}
Let $\ma,\ma+\me\in\rmn$ with $\rank(\ma)=n$; 
$\epsilon_A\equiv \|\ma\|/\|\me\|$; and
\begin{align*}
\ma^T\ma\vx_*=\ma^T\vb, \qquad
(\ma+\me)^T(\ma+\me)\vhx=(\ma+\me)^T\vb.
\end{align*}
If $\vhx\neq \vzero$ then
\begin{align*}
\frac{\|\vhx-\vx_*\|}{\|\vhx\|}
\leq \kappa(\ma)^2\epsilon_A\>\left(
\frac{\|\vb-\ma\vhx\|}{\|\|\ma\|\|\vhx\|}+1+\epsilon_A\right).
\end{align*}
\end{lemma}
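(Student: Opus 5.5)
The plan is a direct residual argument that never inverts $(\ma+\me)^T(\ma+\me)$, so $\ma+\me$ may be rank deficient. I read $\epsilon_A$ as $\|\me\|/\|\ma\|$, consistent with Lemma~\ref{l_ls}; the reciprocal in the statement looks like a typo.

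First, expand the perturbed equations $(\ma+\me)^T(\ma+\me)\vhx=(\ma+\me)^T\vb$ and move everything except $\ma^T\ma\vhx$ to the right. Subtracting $\ma^T\ma\vx_*=\ma^T\vb$ then gives the exact identity
\begin{align*}
\ma^T\ma(\vhx-\vx_*) = -\ma^T\me\vhx + \me^T(\vb-\ma\vhx) - \me^T\me\vhx .
\end{align*}
To check it, write $\me^T(\vb-(\ma+\me)\vhx)=\me^T(\vb-\ma\vhx)-\me^T\me\vhx$. The point of this grouping is that the residual $\vb-\ma\vhx$ is taken with respect to the unperturbed $\ma$, which is the quantity appearing in the bound.

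Second, since $\rank(\ma)=n$, the matrix $\ma^T\ma$ is invertible, so I multiply by $(\ma^T\ma)^{-1}$. I use $(\ma^T\ma)^{-1}\ma^T=\ma^\dagger$ and $\|(\ma^T\ma)^{-1}\|=\|\ma^\dagger\|^2$, the latter from the SVD. Taking norms gives
\begin{align*}
\|\vhx-\vx_*\| \leq \|\ma^\dagger\|\,\|\me\|\,\|\vhx\| + \|\ma^\dagger\|^2\|\me\|\,\|\vb-\ma\vhx\| + \|\ma^\dagger\|^2\|\me\|^2\|\vhx\|.
\end{align*}

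Third, divide by $\|\vhx\|\neq 0$ and rewrite each term using $\|\me\|=\epsilon_A\|\ma\|$ and $\kappa(\ma)=\|\ma\|\|\ma^\dagger\|$:
\begin{itemize}
\item the first term becomes $\kappa(\ma)\,\epsilon_A$;
\item the second becomes $\kappa(\ma)^2\epsilon_A\,\|\vb-\ma\vhx\|/(\|\ma\|\|\vhx\|)$;
\item the third becomes $\kappa(\ma)^2\epsilon_A^2$.
\end{itemize}
Finally, $\kappa(\ma)\geq 1$ lets me replace the first term by $\kappa(\ma)^2\epsilon_A$. Factoring out $\kappa(\ma)^2\epsilon_A$ then gives exactly the stated bound.

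The only real obstacle is the bookkeeping in the first step. The expansion must isolate the residual $\vb-\ma\vhx$, not $\vb-(\ma+\me)\vhx$, and the price is the separate second-order term $\me^T\me\vhx$, which produces the $+\epsilon_A$ inside the parentheses. The remaining steps are routine norm estimates.
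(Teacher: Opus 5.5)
Your proof is correct, and your reading $\epsilon_A=\|\me\|/\|\ma\|$ is the intended one. With the literal reciprocal the bound fails: for $\ma=\vb=\ve_1\in\real^2$ and $\me=M\ve_2$, the relative error is $M^2$ while the bound is only about $M$.

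The paper gives no proof of this lemma, only a citation. Your residual argument is exactly the paper's proof of Theorem~\ref{l_notnorm} specialized to $\mb=\ma$, $\me_B=\me_A=\me$, where $\nu=1$ and $\kappa(\ma^T\ma)=\kappa(\ma)^2$. The only difference is that you bound $(\ma^T\ma)^{-1}\ma^T\me\vhx$ through $\ma^\dagger$, which gives the slightly sharper intermediate term $\kappa(\ma)\epsilon_A$ before you relax it via $\kappa(\ma)\geq 1$.
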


Lemma~\ref{l_ne} suggests that the accuracy
of the normal equations only depends on the least squares residual when it is too large,
$\frac{\|\vb-\ma\vhx\|}{\|\ma\|\|\vhx\|}>1$.

\section{Existing Work}\label{s_exist}
The use of normal equations is often discouraged due to the 'matrix squaring problem,' where 
$\kappa(\ma^T\ma)=\kappa(\ma)^2$, leading to a potentially much worse conditioned linear system. Preconditioning the normal equations is not obvious, as effective preconditioners for $\ma$ do not necessarily make effective preconditioners for~$\ma^T\ma$~\cite{Wathen2022}. 

Nonsymmetric systems $\ma\vx=\vb$ can be solved
by applying  preconditioned iterative methods such as CGNE to the normal equations
\cite[Section 11.3.9.]{GovL13}, \cite{PT2024}. In~\cite{LNZ2025}, CGNE preconditioners are constructed for
linear systems that arise from certain PDE discretizations.
In \cite{Wathennotnormal}, the iterative solution of the not-normal equations $\mb^T\ma\vx=\mb^T\vb$ is proposed, where $\mb\in\rmn$ has the same column space of $\ma$ and can be obtained from an LU factorization of $\ma$.

The iterative solution of preconditioned normal equations with iterative refinement via backward stable algorithms is
presented in~\cite{epperly2025fastrandomizedleastsquaressolvers,xu2024randomizediterativesolveriterative}. The corresponding
FOSSILS/SPIR and SIRR algorithms have an operation count of $\mathcal{O}( mn+ n^3)$.
 An alternative use of iterative refinement for least squares problems is to iteratively refine the seminormal equations $\mr^T\mr\vx=\ma^T\vb$ \cite{carson2025}, where it is observed  
that the seminormal equations, when combined with iterative refinement, are not sensitive to the size of the relative least squares residual.

Although algorithms like FOSSILS and SIRR are numerically stable, the cost of computing the preconditioner remains an issue. Mixed-precision approaches can reduce this cost. For linear systems, Higham and Pranesh~\cite{highampranesh} propose 
Cholesky factorizations in lower precision as  preconditioners for iterative refinement and extend this to the solution of least squares problems via normal equations. Similarly, Scott and T{\r u}ma~\cite{ScottTuma2025} compute incomplete Cholesky factors in lower precision to precondition LSQR. Li~\cite{Li_2024} also investigates mixed-precision within the LSQR algorithm for solving discrete linear ill-posed problems via regularized least squares.

Beyond low-precision factorizations, many randomized sketching techniques are inherently parallelizable and can benefit from GPU acceleration. Chen et. al.~\cite{chen2025gpuparallelizablerandomizedsketchandpreconditionlinear} give a thorough benchmarking of a GPU-based sketch-and-precondition solver based on a sparse sign embedding. Carson and Daužickaitė~\cite{carson2025mixedprecisionsketchingleastsquares} compute a mixed-precision sketch as the preconditioner for GMRES with iterative refinement. 

\section{Preconditioned Normal Equations (PNE)}\label{s_PNE}
We extend an existing PNE perturbation bound to mixed precision (Section~\ref{PNE_gen}) and then present an improvement (Section~\ref{PNE_new}).

\subsection{Extension of an existing PNE perturbation bound to mixed precision}\label{PNE_gen}
A previous bound \cite[Theorem 2.1]{ipsen2025solutionsquaresproblemsrandomized} 
shows that the conditioning of the PNE depends on the least squares residual of the original least squares problem.
Theorem~\ref{t_3} below is a small extension where  
the preconditioner can be computed in a different precision. 

\begin{theorem}\label{t_3}
Let $\ma\in\rmn$ have $\rank(\ma)=n$; $\mr_s\in\rnn$ be nonsingular; $\me_s\in\rnn$, $\me_A\in\rmn$; 
\begin{align*}
\ma_1\equiv \ma(\mr_s+\me_s)^{-1},\qquad \ma_2\equiv \ma_p+\me_p, \qquad
\epsilon_s\equiv\frac{\|\me_s\|}{\|\mr_s\|}, \qquad \epsilon_p\equiv\frac{\|\me_p\|}{\|\ma_p\|};
\end{align*}
and $\|\me_s\|\|\mr_s\inv\|<1$. The computed solutions corresponding 
to (\ref{e_pne}) are 
\begin{align}
\ma_1^T\ma_2\vhx&=\ma_1^T\vb\label{e_t211}\\
\mr_s\vhx&=\vhy.\label{e_pne22}
\end{align}
If  $\vhx\neq \vzero$ and $\vhy\neq \vzero$, then
\begin{align*}
\frac{\|\vx_*-\vhx\|}{\|\vhx\|}\leq \kappa(\mr_s)\kappa(\ma_p)\,\nu\,
\left(\epsilon_p+\kappa(\ma_p)\, \eta_s
\left(\frac{\|\ma_p\vhy-\vb\|}{\|\ma_p\|\|\vhy\|}+\epsilon_p\right)\right),
\end{align*}
where 
\begin{align*}
\nu\equiv \frac{\|\mr_s\vhx\|}{\|\mr_s\| \|\vhx\|}\leq 1\qquad \text{and}\qquad
\eta_s\equiv \frac{\kappa(\mr_s)\epsilon_s}{1-\kappa(\mr_s)\,\epsilon_s}. 
\end{align*}
\end{theorem}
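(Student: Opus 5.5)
The plan is to work in the preconditioned variable $\vy$ and transfer back to $\vx$ only at the end. I read (\ref{e_t211}) as the equation for $\vhy$, that is $\ma_1^T\ma_2\vhy=\ma_1^T\vb$, since (\ref{e_pne22}) then recovers $\vhx$ from $\vhy$. The exact quantities are $\vy_*\equiv\mr_s\vx_*$, which satisfies $\ma_p^T\ma_p\vy_*=\ma_p^T\vb$ because $\ma\vx_*=\ma_p\vy_*$. At the end I use
\begin{align*}
\vx_*-\vhx=\mr_s\inv(\vy_*-\vhy),\qquad \|\vhy\|=\|\mr_s\vhx\|=\nu\,\|\mr_s\|\,\|\vhx\|,
\end{align*}
so that any bound of the form $\|\vy_*-\vhy\|\le c\,\|\vhy\|$ becomes
\begin{align*}
\frac{\|\vx_*-\vhx\|}{\|\vhx\|}\le \kappa(\mr_s)\,\nu\, c.
\end{align*}
This produces the leading factor $\kappa(\mr_s)\nu$ of the statement. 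The inequality $\nu\le 1$ is immediate from $\|\mr_s\vhx\|\le\|\mr_s\|\|\vhx\|$.

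The key step is to express the perturbed left factor relative to $\ma_p$. Since $\ma=\ma_p\mr_s$,
\begin{align*}
\ma_1=\ma_p\mr_s(\mr_s+\me_s)\inv=\ma_p(\mi+\me_s\mr_s\inv)\inv\equiv\ma_p(\mi+\mf).
\end{align*}
The assumption $\|\me_s\|\|\mr_s\inv\|<1$ gives invertibility of $\mi+\me_s\mr_s\inv$. A Neumann series then gives $\|\mf\|\le \frac{\|\me_s\mr_s\inv\|}{1-\|\me_s\mr_s\inv\|}\le\eta_s$, using $\|\me_s\mr_s\inv\|\le\kappa(\mr_s)\epsilon_s$.

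Substituting into the computed system gives $(\mi+\mf)^T\ma_p^T(\ma_p+\me_p)\vhy=(\mi+\mf)^T\ma_p^T\vb$. The natural route, which I would present as the main line, is to cancel the nonsingular factor $(\mi+\mf)^T$. This leaves
\begin{align*}
\ma_p^T\ma_p(\vhy-\vy_*)=-\ma_p^T\me_p\vhy,
\end{align*}
hence $\|\vhy-\vy_*\|\le\|\ma_p^\dagger\|\,\|\me_p\|\,\|\vhy\|=\kappa(\ma_p)\,\epsilon_p\,\|\vhy\|$. Combining this with the transfer step gives
\begin{align*}
\frac{\|\vx_*-\vhx\|}{\|\vhx\|}\le \kappa(\mr_s)\kappa(\ma_p)\,\nu\,\epsilon_p.
\end{align*}
This implies the stated bound, because the omitted term $\kappa(\ma_p)\eta_s(\cdots)$ is nonnegative.

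The main obstacle is whether this cancellation is legitimate in the intended model. If $\mf$ is meant to enter non-multiplicatively, the $\eta_s$ term must be derived explicitly. In that case I would not cancel, but write $\ma_1^T=\ma_p^T+\mf^T\ma_p^T$ and subtract the exact equation, obtaining
\begin{align*}
\ma_p^T\ma_p(\vhy-\vy_*)=-\ma_p^T\me_p\vhy+\mf^T\ma_p^T(\vb-\ma_p\vhy-\me_p\vhy).
\end{align*}
The first term gives the $\epsilon_p$ contribution as above. For the second, apply $(\ma_p^T\ma_p)\inv$ and use $\|(\ma_p^T\ma_p)\inv\|\,\|\ma_p^T\|=\|\ma_p^\dagger\|^2\|\ma_p\|$. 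This yields
\begin{align*}
\kappa(\ma_p)^2\,\eta_s\left(\frac{\|\ma_p\vhy-\vb\|}{\|\ma_p\|\|\vhy\|}+\epsilon_p\right)\|\vhy\|,
\end{align*}
which is exactly the residual-dependent term in the statement. Either way, the final assembly uses only the transfer identity, so I am confident the stated inequality holds.
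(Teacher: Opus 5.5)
Your proof is correct, and reading (\ref{e_t211}) as the equation for $\vhy$ is the intended interpretation. Your fallback computation is essentially the paper's route, inherited from \cite[Theorem 2.1]{ipsen2025solutionsquaresproblemsrandomized}. It treats $\ma_1$ as an additive perturbation of $\ma_p$ of relative size at most $\eta_s$ and subtracts the exact normal equations, which is what produces the residual-dependent term $\kappa(\ma_p)^2\eta_s(\cdots)$.

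Your main line is genuinely different and sharper. Since $\ma_1^T=(\mi+\me_s\mr_s\inv)^{-T}\ma_p^T$ differs from $\ma_p^T$ only by a nonsingular left factor, you cancel it, $\me_s$ drops out, and you get $\kappa(\mr_s)\kappa(\ma_p)\,\nu\,\epsilon_p$. This implies the statement, because $\kappa(\mr_s)\epsilon_s=\|\me_s\|\|\mr_s\inv\|<1$ forces $\eta_s\geq 0$, so the omitted term is indeed nonnegative.

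Your hedge is unnecessary. The hypotheses fix $\ma_1=\ma(\mr_s+\me_s)\inv$ exactly, so the cancellation is legitimate. It is precisely the step the paper itself uses, multiplying by $(\mi+\me_s\mr_s\inv)^T$, in the proof of Theorem~\ref{t_55}.

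As for what each route buys: the cancellation exploits the fact that $\ma_1$ has the same column space as $\ma_p$. It shows that, within this model, the residual term and the $\eta_s$-dependence are artifacts of the additive treatment rather than genuine sensitivity. The additive route gives a weaker bound but stays valid for any left factor $\ma_p+\me_1$ with $\|\me_1\|\leq\eta_s\|\ma_p\|$, including perturbations that leave the column space of $\ma$. Such perturbations are exactly what make a residual term reappear in Theorem~\ref{t_55}, where it is driven by $\epsilon_A$ rather than $\eta_s$.
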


\begin{proof}
The proof is analogous to that of \cite[Theorem 2.1]{ipsen2025solutionsquaresproblemsrandomized}, but omits
the simplification $\epsilon\equiv\max\{\epsilon_p,\epsilon_s\}$. 
\end{proof}

If the perturbation $\epsilon_s$ in the preconditioner is large compared to $\kappa(\mr_s)\inv$, then the bound in Theorem~\ref{t_3} can be dominated by the least squares residual. Section~\ref{exp_accuracy} illustrates that this makes this bound uninformative in mixed-precision. 

\subsection{Improved PNE perturbation bound}\label{PNE_new}
 We improve Theorem~\ref{t_3} with a bound that is informative in both mixed precision and a single working precision.

\begin{theorem}\label{t_55}
Let $\ma\in\rmn$ have $\rank(\ma)=n$; $\mr_s\in\rnn$ be nonsingular; $\me_s\in\rnn$ and $\me_A\in\rmn$; $\|\me_s\|\|\mr_s^{-1}\|<1$, and
\begin{eqnarray*}
   \ma_1\equiv(\ma+\me_A)(\mr_s+\me_s)^{-1}, \qquad  \epsilon_s\equiv\frac{\|\me_s\|}{\|\mr_s\|},\qquad
   \epsilon_A\equiv\frac{\|\me_A\|}{\|\ma\|}.
\end{eqnarray*}
The computed solutions corresponding to (\ref{e_pne}) are 
\begin{eqnarray}
\ma_1^T\ma_1\vhy&=\ma_1^T\vb\label{e_pne2456}\\
(\mr_s+\me_s)\vhx&=\vhy.\label{e_pne2256}
\end{eqnarray}
If $\vhx\neq \vzero$ and $\vhy\neq\vzero$, then 
\begin{align*}
       \frac{\|\vhx-\vx_*\|}{\|\vhx\|} \leq \kappa(\mr_s)\kappa(\ma_p)\epsilon_A\left(\kappa(\ma_p)\kappa(\mr_s)\frac{\|\ma\vhx-\vb\|}{\|\ma\|\|\vhx\|}+1+\kappa(\ma)\epsilon_A\right).
    \end{align*}
\end{theorem}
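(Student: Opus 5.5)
The plan is to notice that the perturbation $\me_s$ in the preconditioner drops out of the computed solution entirely, so that $\vhx$ solves a perturbed set of ordinary normal equations. We then redo the normal-equations perturbation argument carefully enough to get the sharper grouping of the powers of $\kappa$ in the statement.

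\emph{Step 1 (the preconditioner perturbation cancels).} Set $\widehat{\mr}\equiv\mr_s+\me_s$. This matrix is nonsingular because $\|\me_s\|\|\mr_s^{-1}\|<1$. Set $\widetilde{\ma}\equiv\ma+\me_A$, so that $\ma_1=\widetilde{\ma}\widehat{\mr}^{-1}$. Substituting $\vhy=\widehat{\mr}\vhx$ from (\ref{e_pne2256}) into (\ref{e_pne2456}) gives
$\widehat{\mr}^{-T}\widetilde{\ma}^T\widetilde{\ma}\vhx=\widehat{\mr}^{-T}\widetilde{\ma}^T\vb$. Multiplying on the left by $\widehat{\mr}^T$ yields $\widetilde{\ma}^T\widetilde{\ma}\vhx=\widetilde{\ma}^T\vb$. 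Thus $\me_s$ and $\mr_s$ disappear: $\vhx$ is the exact solution of the normal equations for $\ma+\me_A$. Note that $\ma+\me_A$ need not have full rank; we only need $\vhx$ to satisfy this equation.

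\emph{Step 2 (refined normal equations bound).} We could simply quote Lemma~\ref{l_ne}, but it gives $\kappa(\ma)^2\epsilon_A(\cdot+1+\epsilon_A)$, which has one factor of $\kappa$ too many on the ``$1$'' term. So we redo the argument directly. Let $\vr\equiv\vb-\ma\vhx$, so that $\vb-\widetilde{\ma}\vhx=\vr-\me_A\vhx$. Expanding $0=\widetilde{\ma}^T(\vr-\me_A\vhx)$ gives
\begin{align*}
\ma^T\vr=\ma^T\me_A\vhx-\me_A^T\vr+\me_A^T\me_A\vhx.
\end{align*}
Since $\ma^T\ma(\vhx-\vx_*)=-\ma^T\vr$, we obtain
\begin{align*}
\vhx-\vx_*=-\ma^\dagger\me_A\vhx+(\ma^T\ma)^{-1}\me_A^T(\vr-\me_A\vhx).
\end{align*}
We take norms using $\|(\ma^T\ma)^{-1}\|=\|\ma^\dagger\|^2$. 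The first term is bounded by $\kappa(\ma)\epsilon_A\|\vhx\|$. The second is bounded by $\kappa(\ma)^2\epsilon_A\left(\frac{\|\vr\|}{\|\ma\|}+\epsilon_A\|\vhx\|\right)$. Dividing by $\|\vhx\|$ gives
\begin{align*}
\frac{\|\vhx-\vx_*\|}{\|\vhx\|}\leq\kappa(\ma)\epsilon_A\left(\kappa(\ma)\frac{\|\ma\vhx-\vb\|}{\|\ma\|\|\vhx\|}+1+\kappa(\ma)\epsilon_A\right).
\end{align*}

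\emph{Step 3 (express in terms of the preconditioner).} Since $\ma=\ma_p\mr_s$ with $\ma_p=\ma\mr_s^{-1}$, we have $\|\ma\|\leq\|\ma_p\|\|\mr_s\|$ and $\ma^\dagger=\mr_s^{-1}\ma_p^\dagger$. The latter holds because $\mr_s$ is square and nonsingular and $\ma_p$ has full column rank. Hence $\|\ma^\dagger\|\leq\|\mr_s^{-1}\|\|\ma_p^\dagger\|$, and so $\kappa(\ma)\leq\kappa(\ma_p)\kappa(\mr_s)$. We substitute this bound for the leading factor $\kappa(\ma)$ and for the factor multiplying the residual, and keep $\kappa(\ma)\epsilon_A$ in the last term. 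This gives exactly the claimed bound.

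The only real obstacle is Step~1: seeing that the lower-precision preconditioner error $\me_s$ cancels completely. That cancellation is what makes the bound independent of $\epsilon_s$, unlike Theorem~\ref{t_3}. After that, the only care needed is to avoid the cruder Lemma~\ref{l_ne}, and instead track the $\ma^\dagger\me_A\vhx$ term separately so that it carries only a single factor of $\kappa$.
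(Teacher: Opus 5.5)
Your proof is correct. Its algebra is the paper's, but you organize it differently. The paper stays in the preconditioned coordinates throughout:
\begin{itemize}
\item it writes $\ma_1=(\mi+\mh)\ma_p(\mi+\me_s\mr_s^{-1})^{-1}$ with $\mh\equiv\me_A\ma^\dagger$;
\item it multiplies the residual equation $\ma_1^T\vr=-\ma_1^T\me_A\vhx$ by $(\mi+\me_s\mr_s^{-1})^T$, which is where $\me_s$ drops out;
\item it applies $(\ma_p^T\ma_p)^{-1}$ and $\mr_s^{-1}$ to reach \eqref{eq_2};
\item it bounds $\|\mr_s^{-1}\|$, $\|\ma_p^\dagger\|$ and $\|\mh\|\le\kappa(\ma)\epsilon_A$ separately, then uses $\|\ma\|\le\|\ma_p\|\|\mr_s\|$.
\end{itemize}
The second step is the same cancellation as your Step~1. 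The paper, however, never says outright that $\vhx$ simply solves $(\ma+\me_A)^T(\ma+\me_A)\vhx=(\ma+\me_A)^T\vb$.

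Three identities show that \eqref{eq_2} is exactly your expression for $\vhx-\vx_*$, up to the opposite sign convention for $\vr$: $\mr_s^{-1}(\ma_p^T\ma_p)^{-1}\mr_s^{-T}=(\ma^T\ma)^{-1}$, $\mr_s^{-1}\ma_p^\dagger=\ma^\dagger$, and $\ma^\dagger\mh^T=(\ma^T\ma)^{-1}\me_A^T$. So the two proofs differ only in where the norms are taken.

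You take norms after everything has collapsed onto $\ma$. That gives the intermediate bound $\kappa(\ma)\epsilon_A\bigl(\kappa(\ma)\frac{\|\ma\vhx-\vb\|}{\|\ma\|\|\vhx\|}+1+\kappa(\ma)\epsilon_A\bigr)$. This is at least as sharp as the theorem, since $\kappa(\ma)\le\kappa(\ma_p)\kappa(\mr_s)$. It is what Lemma~\ref{l_ls} yields once the perturbed residual is bounded by $\|\vb-\ma\vhx\|+\|\me_A\|\|\vhx\|$, but you get it without assuming $\rank(\ma+\me_A)=n$.

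What your route buys is transparency and a better constant:
\begin{itemize}
\item In this perturbation model the preconditioner does not affect $\vhx$ at all.
\item The paper's factor $\kappa(\mr_s)\kappa(\ma_p)$ comes only from splitting $\|(\ma^T\ma)^{-1}\|$ and $\|\ma^\dagger\|$ into products of norms.
\item As you noticed, the middle term of Lemma~\ref{l_ne} can be improved by a factor $\kappa(\ma)$.
\end{itemize}
What the paper's route buys is a bound stated directly in terms of $\kappa(\ma_p)$ and $\kappa(\mr_s)$, parallel to Theorem~\ref{t_3} and the HPNE bounds.
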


\begin{proof}
Rearrange~\eqref{e_pne2456} and insert \eqref{e_pne2256} 
to find the computed least squares residual
\begin{align}\label{eq_1}
        \ma_1^T\vr=-\ma_1^T\me_A\vhx, \qquad \text{where}\qquad
        \vr\equiv \ma\vhx-\vb.
    \end{align}
Since $\|\me_s\|\|\mr_s^{-1}\|<1$, the Banach lemma \cite[Lemma 2.3.3]{GovL13} implies that $\mr_s+\me_s$ is nonsingular, so that $\ma_1$ is well defined. Because $\ma$ has full column rank, 
\begin{align}\ma+\me_A=(\mi+\me_A\ma^\dagger)\ma=(\mi+\mh)\ma, \qquad \text{where}\qquad
\mh\equiv\me_A\ma^\dagger.\label{eqbanach}
\end{align} 
To relate $\ma_1$ to $\ma_p$, we combine~\eqref{eqbanach} with $(\mr_s+\me_s)\inv=\mr_s^{-1}(\mi+\me\mr_s^{-1})^{-1}$,
\begin{align*}
\ma_1=(\mi+\mh)\ma_p(\mi+\me_s\mr_s\inv)\inv.
\end{align*}
Multiply (\ref{eq_1}) by $(\mi+\me_s\mr_s\inv)^T$ on both sides, and apply the above expression of $\ma_1$,
\begin{align*}
    \ma_p^T(\mi+\mh)^T\vr=-\ma_p^T(\mi+\mh)^T\me_A\vhx.
\end{align*}
 Isolate $\ma_p^T\vr$,
 \begin{align*}
    \ma_p^T\vr= -\ma_p^T\mh^T\vr-\ma_p^T(\mi+\mh)^T\me_A\vhx,
\end{align*}
and multiply both sides by $(\ma_p^T\ma_p)\inv$,
\begin{align*}
    \ma_p^\dagger\ma\vhx-\vy_*= -(\ma_p^T\ma_p)\inv\mr_s^{-T}\me_A^T\vr-\ma_p^\dagger(\mi+\mh)^T\me_A\vhx.
\end{align*}
Rewrite the left hand side as
\begin{align*}
     \ma_p^\dagger\ma\vhx-\vy_*=\mr_s\ma^\dagger\ma\vhx-\mr_s\vx_*=\mr_s(\vhx-\vx_*),
\end{align*}
which implies \begin{align}\label{eq_2}
    \vhx-\vx_*=\mr_s\inv\left(-(\ma_p^T\ma_p)\inv\mr_s^{-T}\me_A^T\vr-\ma_p^\dagger(\mi+\mh)^T\me_A\vhx\right).
\end{align}
Bound the norm of $ \vhx-\vx_*$ by bounding the norm of each summand on the right hand side of~\eqref{eq_2} in turn. 
First, \begin{align}\label{eq_3}
    \|\mr_s^{-T}\me_A^T\vr\|\leq\frac{\kappa(\mr_s)\|\me_A\|}{\|\mr_s\|}\|\vr\|.
\end{align}
Abbreviating $\alpha\equiv 1+\kappa(\ma)\epsilon_A$ gives
\begin{align}
   \| \ma_p^\dagger(\mi+\mh)^T\me_A\vhx)\|
   & \leq \alpha\|\ma_p^\dagger\|\|\me_A\|\|\vhx\|. \label{eq_4}
\end{align} 
Combine \eqref{eq_2}, \eqref{eq_3}, and~\eqref{eq_4},
\begin{align*}
    \frac{\|\vhx-\vx_*\|}{\|\vhx\|}\leq \|\mr_s\inv\|\kappa(\ma_p)\left(\frac{\kappa(\ma_p)}{\|\ma_p\|^2}\,\frac{\kappa(\mr_s)\|\me_A\|}{\|\mr_s\|}\frac{\|\vr\|}{\|\vhx\|}+\alpha\frac{\|\me_A\|}{\|\ma_p\|}\right).\
\end{align*}
From $\frac{1}{\|\ma_p\|\|\mr_s\|}\leq\frac{1}{\|\ma\|}$ follows
\begin{align*}
    \frac{\|\vhx-\vx_*\|}{\|\vhx\|}&\leq \|\mr_s\inv\|\kappa(\ma_p)\left(\kappa(\ma_p)\kappa(\mr_s)\epsilon_A\frac{\|\vr\|}{\|\ma_p\|\|\vhx\|}+\alpha\frac{\|\me_A\|}{\|\ma_p\|}\right)\\
&\leq\kappa(\mr_s)\kappa(\ma_p)\epsilon_A\left(\kappa(\ma_p)\kappa(\mr_s)\frac{\|\vr\|}{\|\ma\|\|\vhx\|}+\alpha\right).
\end{align*}
\end{proof}

 Theorem~\ref{t_55} implies that to first order the error is bounded by \begin{align}\label{e_pnefo}
     \frac{\|\vx_*-\vhx\|}{\|\vhx\|}\lesssim
\kappa(\mr_s)\kappa(\ma_p)\epsilon_A\max\left\{\kappa(\ma_p)\kappa(\mr_s)\frac{\|\ma\vhx-\vb\|}{\|\ma\|\|\vhx\|},1\right\}
 \end{align}
With an effective preconditioner, so that $\kappa(\ma_p)\lesssim10$, this bound resembles the one in Lemma~\ref{l_ls} since  $\kappa(\mr_s)\kappa(\ma_p)\approx\kappa(\ma)$. Theorem~\ref{t_55} 
is not limited to triangular matrices but 
holds for any nonsingular preconditioner $\mr_s$.

Unlike Theorem~\ref{t_3}, the bound in Theorem~\ref{t_55} does not depend on the perturbation~$\epsilon_s$ in the preconditioner,
as long as $\mr_s+\me_s$ is nonsingular. Thus, Theorem~\ref{t_55} is more informative than Theorem~\ref{t_3}
if $\epsilon_s$ is large.

\section{Half-Preconditioned Normal Equations (HPNE)} \label{s_HPNE}
We extend an existing HPNE perturbation bound to mixed precision (Section~\ref{HPNE_gen}) and then present an improved bound (Section~\ref{HPNE_new}).

\subsection{Extension an existing HPNE perturbation bound to mixed precision}\label{HPNE_gen}
A previous bound \cite[Theorem 3.1]{ipsen2025solutionsquaresproblemsrandomized} shows that the 
conditioning of the HPNE solution depends on the least squares residual  of the original least squares problem. Theorem~\ref{t_2} below is a small extension where the preconditioner can be computed in a different precision. 

\begin{theorem}\label{t_2}
Let $\ma\in\rmn$ have $\rank(\ma)=n$; $\mr_s\in\rnn$ be nonsingular;  $\me_s\in\rnn$, $\me_A\in\rmn$; 
\begin{align*}
\ma_1\equiv \ma(\mr_s+\me_s)^{-1},\qquad \ma_2\equiv \ma+\me_A, \qquad
\epsilon_s\equiv\frac{\|\me_s\|}{\|\mr_s\|}, \qquad \epsilon_A\equiv\frac{\|\me_A\|}{\|\ma\|},
\end{align*}
and $\|\me_s\|\|\mr_s\inv\|<1$.
The computed solutions corresponding to (\ref{e_hpne}) are
\begin{align}
\ma_1^T\ma_2\vhx=\ma_1^T\vb.\label{e_t21}
\end{align}
If  $\vhx\neq \vzero$, then
\begin{align*}
\frac{\|\vx_*-\vhx\|}{\|\vhx\|}\leq \kappa(\ma_p^T\ma)\, \nu\, 
\left(\eta_s\frac{\|\vb-\ma\vhx\|}{\|\ma\|\|\vhx\|}+(1+\eta_s)\epsilon_A\right),
\end{align*}
where 
\begin{align*}
\nu\equiv \frac{\|\ma_p\|\|\ma\|}{\|\ma_p^T\ma\|}\geq 1,\qquad 
\eta_s\equiv \frac{\kappa(\mr_s)\epsilon_s}{1-\kappa(\mr_s)\epsilon_s}.
\end{align*}
\end{theorem}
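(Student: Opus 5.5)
Following the proof of Theorem~\ref{t_55}, we start from the computed equation \eqref{e_t21} and rewrite it as an equation for the residual $\vr\equiv\vb-\ma\vhx$:
\begin{align*}
\ma_1^T\vr=\ma_1^T\me_A\vhx .
\end{align*}
By the Banach lemma, $\|\me_s\|\|\mr_s\inv\|<1$ makes $\mr_s+\me_s$ nonsingular. We write
\begin{align*}
(\mr_s+\me_s)\inv=\mr_s\inv(\mi+\me_s\mr_s\inv)\inv=\mr_s\inv(\mi+\mf),
\qquad \mf\equiv(\mi+\me_s\mr_s\inv)\inv-\mi ,
\end{align*}
so that $\ma_1=\ma_p(\mi+\mf)$. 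The Banach lemma again gives
\begin{align*}
\|\mf\|\le \frac{\|\me_s\mr_s\inv\|}{1-\|\me_s\mr_s\inv\|}\le \eta_s ,
\qquad \|\mi+\mf\|\le 1+\eta_s .
\end{align*}

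Substituting this factorization into the residual equation gives
\begin{align*}
\ma_p^T\vr=-\mf^T\ma_p^T\vr+(\mi+\mf)^T\ma_p^T\me_A\vhx .
\end{align*}
Since $\ma_p^T\vb=\ma_p^T\ma\vx_*$ (the exact HPNE solution), the left hand side equals $\ma_p^T\ma(\vx_*-\vhx)$, and $\ma_p^T\ma=\mr_s^{-T}\ma^T\ma$ is nonsingular. Hence
\begin{align*}
\vx_*-\vhx=(\ma_p^T\ma)\inv\left(-\mf^T\ma_p^T\vr+(\mi+\mf)^T\ma_p^T\me_A\vhx\right).
\end{align*}
Taking norms and using $\|\ma_p^T\vr\|\le\|\ma_p\|\|\vr\|$ and $\|\ma_p^T\me_A\vhx\|\le\|\ma_p\|\|\ma\|\epsilon_A\|\vhx\|$, we get
\begin{align*}
\frac{\|\vx_*-\vhx\|}{\|\vhx\|}\le\|(\ma_p^T\ma)\inv\|\,\|\ma_p\|\,\|\ma\|
\left(\eta_s\frac{\|\vr\|}{\|\ma\|\|\vhx\|}+(1+\eta_s)\epsilon_A\right).
\end{align*}
Writing $\|(\ma_p^T\ma)\inv\|\|\ma_p\|\|\ma\|=\kappa(\ma_p^T\ma)\,\nu$ gives the stated bound. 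The inequality $\nu\ge1$ follows from submultiplicativity.

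The main obstacle is the term coming from $\mf^T\ma_p^T\vr$. Unlike in the exact case, $\ma_p^T\vr$ does not vanish, and it must not be bounded so crudely that it introduces extra condition-number factors. Isolating $\ma_p^T\vr$ before inverting keeps that term proportional to $\eta_s$ times the residual, which is what produces the residual dependence in the statement. One sign check is also needed: the right hand side sign in $\ma_1^T\vr=\ma_1^T\me_A\vhx$ comes from $\ma_1^T(\ma+\me_A)\vhx=\ma_1^T\vb$.
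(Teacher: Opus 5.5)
Your proof is correct and follows essentially the same route as the paper, which defers to the earlier HPNE argument. You treat $\ma_1=\ma_p(\mi+\mf)=\ma_p+\ma_p\mf$ as an additive perturbation of $\ma_p$ of relative size at most $\eta_s$, then argue exactly as in the not-normal equations bound of Theorem~\ref{l_notnorm} with $\mb=\ma_p$ and $\epsilon_B\le\eta_s$; keeping that perturbation, rather than cancelling the factor $(\mi+\me_s\mr_s^{-1})^T$ as in Theorem~\ref{t_22}, is what puts $\eta_s$ in front of the residual.
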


\begin{proof}
The proof is analogous to that of \cite[Theorem 3.1]{ipsen2025solutionsquaresproblemsrandomized}, but 
omits the simplification $\epsilon=\max\{\epsilon_A,\epsilon_s\}$. \end{proof}

If the perturbation $\epsilon_s$ in the preconditioner is large 
compared to $\kappa(\mr_s)\inv$, then the bound in Theorem~\ref{t_2} can be dominated by the least squares residual. Section~\ref{exp_accuracy} illustrates that this makes this bound uninformative in mixed-precision.

\subsection{Improved HPNE perturbation bound} \label{HPNE_new}
  We improve Theorem~\ref{t_2} with a bound that is informative in  both mixed precision and a single working precision.  

\begin{theorem}\label{t_22}
Let $\ma\in\rmn$ have $\rank(\ma)=n$;  $\mr_s\in\rnn$ be nonsingular; $\me_s\in\rnn$, $\me_A\in\rmn$; $\|\me_s\|\|\mr_s\inv\|<1$, and
\begin{align*}
\ma_1\equiv (\ma+\me_A)(\mr_s+\me_s)^{-1},\qquad \ma_2\equiv \ma+\me_A, \qquad
\epsilon_s\equiv\frac{\|\me_s\|}{\|\mr_s\|}, 
\qquad \epsilon_A\equiv\frac{\|\me_A\|}{\|\ma\|}.
\end{align*}
The computed solutions corresponding to (\ref{e_hpne}) are
\begin{align}
\ma_1^T\ma_2\vhx=\ma_1^T\vb.\label{e_t212}
\end{align}
If  $\vhx\neq \vzero$, then
\begin{align}\label{e_hpnetrue}
    \frac{\|\vhx-\vx_*\|}{\|\vhx\|}\leq\kappa(\ma_p^T\ma)\nu\epsilon_A\left(\kappa(\mr_s)\frac{\|\ma\vhx-\vb\|}{\|\ma\|\|\vhx\|}+1+\kappa(\ma)\epsilon_A\right),
\end{align}
where 
\begin{align*}
\nu\equiv \frac{\|\ma_p\|\|\ma\|}{\|\ma_p^T\ma\|}\geq 1.
\end{align*}
\end{theorem}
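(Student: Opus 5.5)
The plan is to follow the proof of Theorem~\ref{t_55}, except that there is no final triangular solve: we work directly with the non-symmetric matrix $\ma_p^T\ma$. Set $\vr\equiv\ma\vhx-\vb$. Since $\ma_2=\ma+\me_A$, equation~\eqref{e_t212} can be rearranged as
\begin{align*}
\ma_1^T\vr=-\ma_1^T\me_A\vhx .
\end{align*}
The Banach lemma together with $\|\me_s\|\|\mr_s\inv\|<1$ shows that $\mr_s+\me_s$ is nonsingular. As in~\eqref{eqbanach}, we write $\ma+\me_A=(\mi+\mh)\ma$ with $\mh\equiv\me_A\ma^\dagger$. This gives
\begin{align*}
\ma_1=(\mi+\mh)\ma_p(\mi+\me_s\mr_s\inv)\inv .
\end{align*}
Multiplying the residual equation on the left by $(\mi+\me_s\mr_s\inv)^T$ removes every trace of $\me_s$, leaving
\begin{align*}
\ma_p^T(\mi+\mh)^T\vr=-\ma_p^T(\mi+\mh)^T\me_A\vhx .
\end{align*}
This is why $\epsilon_s$ will not appear in the final bound.

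Next we isolate $\ma_p^T\vr=-\ma_p^T\mh^T\vr-\ma_p^T(\mi+\mh)^T\me_A\vhx$ and simplify the first term. Since $\ma^T(\ma^\dagger)^T=(\ma^\dagger\ma)^T=\mi$, we have
\begin{align*}
\ma_p^T\mh^T=\mr_s^{-T}\ma^T(\ma^\dagger)^T\me_A^T=\mr_s^{-T}\me_A^T .
\end{align*}
For the left-hand side, note that $\ma_p^T\vb=\mr_s^{-T}\ma^T\vb=\mr_s^{-T}\ma^T\ma\vx_*=\ma_p^T\ma\vx_*$. Hence $\ma_p^T\vr=\ma_p^T\ma(\vhx-\vx_*)$. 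The matrix $\ma_p^T\ma=\mr_s^{-T}\ma^T\ma$ is nonsingular, so
\begin{align*}
\vhx-\vx_*=-(\ma_p^T\ma)\inv\left(\mr_s^{-T}\me_A^T\vr+\ma_p^T(\mi+\mh)^T\me_A\vhx\right).
\end{align*}

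Finally we bound the two summands. The second summand is at most $\|(\ma_p^T\ma)\inv\|\,\|\ma_p\|\,\alpha\,\|\me_A\|\,\|\vhx\|$, where $\alpha\equiv1+\kappa(\ma)\epsilon_A$ bounds $\|\mi+\mh\|$. Using $\kappa(\ma_p^T\ma)\nu=\|(\ma_p^T\ma)\inv\|\,\|\ma_p\|\,\|\ma\|$, this term becomes $\kappa(\ma_p^T\ma)\nu\,\epsilon_A\,\alpha\,\|\vhx\|$.

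The first summand is at most $\|(\ma_p^T\ma)\inv\|\,\|\mr_s\inv\|\,\epsilon_A\|\ma\|\,\|\vr\|$. The only nontrivial step is to produce the factor $\kappa(\mr_s)$ and the normalization $\|\ma\|\|\vhx\|$. For this we use $\|\ma\|=\|\ma_p\mr_s\|\le\|\ma_p\|\|\mr_s\|$, which gives $\|\mr_s\inv\|\le\kappa(\mr_s)\|\ma_p\|/\|\ma\|$. The first term is then at most
\begin{align*}
\kappa(\ma_p^T\ma)\nu\,\epsilon_A\,\kappa(\mr_s)\,\frac{\|\vr\|}{\|\ma\|}.
\end{align*}
Dividing the sum of the two bounds by $\|\vhx\|$ yields~\eqref{e_hpnetrue}.

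The main obstacle is the algebra in the second step: recognizing that $\ma_p^T\mh^T$ collapses to $\mr_s^{-T}\me_A^T$. If this simplification is missed, a spurious factor of $\kappa(\ma)$ would multiply the residual term.
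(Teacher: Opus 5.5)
Your proposal is correct and follows essentially the same route as the paper's proof. You write $\ma_1=(\mi+\mh)\ma_p(\mi+\me_s\mr_s^{-1})^{-1}$, multiply the residual equation by $(\mi+\me_s\mr_s^{-1})^T$ to eliminate $\me_s$, then multiply by $(\ma_p^T\ma)^{-1}$ and use $\ma=\ma_p\mr_s$ to normalize. You also spell out two steps the paper leaves implicit: the identity $\ma_p^T\mh^T=\mr_s^{-T}\me_A^T$, and $\ma_p^T\vr=\ma_p^T\ma(\vhx-\vx_*)$.
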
 

\begin{proof}
As in the proof of Theorem~\ref{t_55}, the Banach lemma \cite[Lemma 2.3.3]{GovL13} implies that $\mr_s+\me_s=(\mi+\me_s\mr_s\inv)\mr_s$ is invertible, and that $\ma_1$ is well defined. Since $\ma$
has full column rank,
\begin{align*}
\ma_1=(\mi+\me_A\ma^\dagger)\ma_p(\mi+\me_s\mr_s\inv)\inv=(\mi+\mh)\ma_p(\mi+\me_s\mr_s\inv)\inv, \qquad \mh\equiv\me_A\ma^\dagger.
\end{align*}
Find the computed least squares residual in~\eqref{e_t212} as 
\begin{align*}
\ma_1^T\vr=-\ma_1^T\me_A\vhx\qquad \text{where} \qquad 
\vr\equiv\ma\vhx-\vb,
\end{align*}
multiply both sides by $(\mi+\me_s\mr_s\inv)^T$ and use the above
expression for $\ma_1$,
\begin{align*}
   \ma_p^T\vr=-\mr_s^{-T}\me_A^T\vr-\ma_p^T(\mi+\mh)^T\me_A\vhx.
\end{align*}
At last, multiply both sides by $(\ma_p^T\ma)\inv$ and take norms, 
\begin{align*}
    \frac{\|\vhx-\vx_*\|}{\|\vhx\|}&\leq\kappa(\ma_p^T\ma)\frac{\|\ma_p\|}{\|\ma_p^T\ma\|}\left(\kappa(\mr_s)\|\me_A\|\frac{\|\vr\|}{\|\ma_p\|\|\mr_s\|\|\vhx\|}+(1+\kappa(\ma)\epsilon_A)\|\me_A\|\right)
    \\
    &\leq\kappa(\ma_p^T\ma)\frac{\|\ma_p\|\|\ma\|}{\|\ma_p^T\ma\|}\left(\kappa(\mr_s)\epsilon_A\frac{\|\vr\|}{\|\ma\|\|\vhx\|}+(1+\kappa(\ma)\epsilon_A)\epsilon_A\right),
\end{align*}
where the last inequality follows from $\ma=\ma_p\mr_s$.
\end{proof}

Theorem~\ref{t_22} shows that to first order the HPNE error is bounded by 
\begin{equation}\label{e_fohpne}
    \frac{\|\vhx-\vx_*\|}{\|\vhx\|}\lesssim\kappa(\ma_p^T\ma)\nu\epsilon_A
    \max\left\{\kappa(\mr_s)\frac{\|\ma\vhx-\vb\|}{\|\ma\|\|\vhx\|}, \, 1\right\}.
\end{equation}
With an effective preconditioner, so that $\kappa(\ma_p)\lesssim10$, this bound resembles the one in Lemma~\ref{l_ls} since  $\kappa(\ma_p^T\ma)\approx\kappa(\ma)$, see~\cite[Section 3]{ipsen2025solutionsquaresproblemsrandomized} for details on this approximation. Similar to the PNE bound in Theorem~\ref{t_55}, the bound in Theorem~\ref{t_2} does not depend on the perturbation $\epsilon_s$ in the preconditioner, as long as $\mr_s+\me_s$ is nonsingular.

\section{Seminormal and not-normal equations }\label{s_var}
We consider two alternative approaches for normal equations,
and present perturbation bounds for the seminormal equations (Section~\ref{s_sne}) and the not-normal equations (Section~\ref{s_nne}).

\subsection{Seminormal equations}\label{s_sne}
If $\ma=\mq\mr$ is a thin QR factorization where $\mq\in\rmn$ has orthonormal columns and $\mr\in\rnn$ is upper triangular, the seminormal equations~\cite{BJORCK198731} are
 \begin{equation}\label{e_sne}
    \mr^T\mr\vx_*=\ma^T\vb.
\end{equation}
 The seminormal equations can be used for solving least squares problems~(\ref{e_ls}) when $\ma$ is large and sparse,
 or when $\mq$ is expensive to access or store~\cite{BJORCK198731}.
 
The perturbation bound in Theorem~\ref{l_semi} below makes no  assumptions on the perturbation $\me$, so the perturbed matrix $\ma+\me$ can be rank-deficient.

\begin{theorem}\label{l_semi} 
Let $\ma\in\rmn$ have $\rank(\ma)=n$; $\mr_s\in\rnn$ be nonsingular; $\me\in\rmn$, $\epsilon\equiv\|\me\|/\|\ma\|$; 
and $\ma+\me=\hat{\mq}\hat{\mr}$ a thin QR factorization where $\hat{\mq}\in\rmn$ has orthonormal columns.
The computed solutions corresponding to (\ref{e_sne}) are
\begin{equation}\label{eq1}
       \hat{\mr}^T\hat{\mr}\vhx=(\ma+\me)^T\vb.
    \end{equation}
If $\vhx\neq\mathbf{0}$ then 
    \begin{equation}
        \frac{\|\vhx-\vx_*\|}{\|\vhx||}\leq \kappa(\ma)^2\epsilon \left(\frac{\|\ma\vhx-\vb\|}{\|\ma\|\|\vhx\|}+1+\epsilon\right)
    \end{equation}
\end{theorem}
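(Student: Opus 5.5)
The key observation is that the seminormal equations computed from a QR factorization of the perturbed matrix coincide exactly with the normal equations of that perturbed matrix. Hence Theorem~\ref{l_semi} should reduce to Lemma~\ref{l_ne}.

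Since $\ma+\me=\hat{\mq}\hat{\mr}$ with $\hat{\mq}^T\hat{\mq}=\mi$, we have
\begin{align*}
(\ma+\me)^T(\ma+\me)=\hat{\mr}^T\hat{\mq}^T\hat{\mq}\hat{\mr}=\hat{\mr}^T\hat{\mr}.
\end{align*}
This holds whether or not $\hat{\mr}$ is nonsingular, so no rank assumption on $\ma+\me$ is needed. Therefore \eqref{eq1} is literally
\begin{align*}
(\ma+\me)^T(\ma+\me)\vhx=(\ma+\me)^T\vb .
\end{align*}
Likewise, the exact seminormal equations \eqref{e_sne} are, via $\ma=\mq\mr$, identical to the exact normal equations $\ma^T\ma\vx_*=\ma^T\vb$. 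So $\vx_*$ is the same solution used in Lemma~\ref{l_ne}.

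With both facts in hand, apply Lemma~\ref{l_ne} with $\me$ in place of the perturbation and $\epsilon_A=\epsilon=\|\me\|/\|\ma\|$. This gives exactly the stated bound, $\kappa(\ma)^2\epsilon\bigl(\|\ma\vhx-\vb\|/(\|\ma\|\|\vhx\|)+1+\epsilon\bigr)$. (The lemma's definition of $\epsilon_A$ is written inverted, as $\|\ma\|/\|\me\|$; I will read it as the intended $\|\me\|/\|\ma\|$.)

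I expect no real obstacle here; the only point to check is that Lemma~\ref{l_ne} makes no rank assumption on $\ma+\me$, which the paper explicitly states. For completeness, the lemma itself follows by a short argument. Set $\vr=\ma\vhx-\vb$ and rewrite the perturbed equation as
\begin{align*}
\ma^T\vr=-\me^T\vr-(\ma+\me)^T\me\vhx .
\end{align*}
Since $\ma^T\vr=\ma^T\ma(\vhx-\vx_*)$, multiplying by $(\ma^T\ma)^{-1}$ and using $\|(\ma^T\ma)^{-1}\|=\|\ma^\dagger\|^2$ gives
\begin{align*}
\|\vhx-\vx_*\|\leq\|\ma^\dagger\|^2\bigl(\|\me\|\|\vr\|+(\|\ma\|+\|\me\|)\|\me\|\|\vhx\|\bigr).
\end{align*}
Dividing by $\|\vhx\|$ and writing $\|\ma^\dagger\|^2\|\me\|=\kappa(\ma)^2\epsilon/\|\ma\|$ yields the bound.
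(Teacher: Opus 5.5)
Your proposal is correct and follows the paper's proof exactly: you use the orthonormal columns of $\hat{\mq}$ to rewrite $\hat{\mr}^T\hat{\mr}=(\ma+\me)^T(\ma+\me)$ and then invoke Lemma~\ref{l_ne}. Your self-contained derivation of that lemma (essentially the argument of Theorem~\ref{l_notnorm} with $\mb=\ma$) is also correct, and you rightly read the lemma's $\epsilon_A$ as $\|\me\|/\|\ma\|$.
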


\begin{proof}
 Because $\hat{\mq}$ has orthonormal columns, we can write
 \begin{align*}
(\ma+\me)^T\vb=    \hat{\mr}^T\hat{\mr}\vhx
=    \hat{\mr}^T\hat{\mq}^T\hat{\mq}\hat{\mr}\vhx
= (\ma+\me)^T(\ma+\me)\vhx.   
 \end{align*}
Now apply Lemma~\ref{l_ne}.
\end{proof}

The bound in Theorem~\ref{l_semi} is the same as the one for the normal equations in Lemma~\ref{l_ne}. This is not surprising because $\mr^T\mr=\ma^T\ma$ in exact arithmetic.  The roundoff error analysis in \cite[Theorem 3.1]{BJORCK198731} also shows that the seminormal equations depend on $\kappa(\ma)^2$, though 
the perturbation analysis here assumes exact arithmetic after the 
perturbations to the inputs.

Theorem~\ref{l_semi} shows that, to first order, the relative error in $\vhx$ is bounded by
\begin{equation*}
\frac{\|\vx_*-\vhx\|}{\|\vhx\|}\lesssim \kappa(\ma)^2\,\epsilon\, 
\max\left\{\frac{\|\vb-\ma\vhx\|}{\|\ma\|\|\vhx\|}, 1\right\}.
\end{equation*}
Thus, like the normal equations, the conditioning of the seminormal equations
depends on the least squares residual when it is too large, that is, if
$\frac{\|\vb-\ma\vhx\|}{\|\ma\|\|\vhx\|}>1$. This quantifies the claim that the seminormal equations together with iterative refinement are insensitive to the size of the least squares residual in~\cite[Section 8]{carson2025}; the seminormal equations are sensitive to the size of the least squares residual, but only when it is large. However, much like the normal equations in Lemma~\ref{l_ls}, the error depends on the square of the condition number of $\ma$.

\subsection{Not-normal equations}\label{s_nne}
For a matrix  $\mb\in\rmn$ with $\rank(\mb)=n$ and  the same column space as $\ma$, the not-normal equations are~\cite{Wathennotnormal}
\begin{equation}\label{e_nne}
    \mb^T\ma\vx_*=\mb^T\vb.
\end{equation}
The not-normal equations, combined with an iterative linear system solver, represent a potential approach for solving least squares problems that are large and sparse \cite{Wathennotnormal}.

The perturbation bound in Theorem~\ref{l_notnorm} below makes no  assumptions on the perturbations $\me_A$ and $\me_B$, so the perturbed matrices $\ma+\me_A$ and $\mb+\me_B$ can be rank-deficient.

\begin{theorem}\label{l_notnorm}
 Let $\ma,\mb\in\rmn$ have $\rank(\ma)=\rank(\mb)=n$;
 $\mc\in\rnn$ be nonsingular with $\ma=\mb\mc$; $\me_A\in\rnn$, $\me_B\in\rmn$; $\epsilon_A\equiv\|\me_A\|/\|\ma\|$, and
$\epsilon_B\equiv\|\me_B\|/\|\mb\|$.
 The computed solutions corresponding to (\ref{e_nne}) are
\begin{equation}\label{eq2}
 (\mb+\me_B)^T(\ma+\me_A)\vhx=(\mb+\me_B)^T\vb.
 \end{equation}
If $\vhx\neq\vzero$ then
    \begin{equation}\label{eqnotnorm}
        \frac{\|\vhx-\vx_*\|}{\|\vhx||}\leq \kappa(\mb^T\ma)\nu\left(\epsilon_B\frac{\|\ma\vhx-\vb\|}{\|\ma\|\|\vhx\|}|+(1+\epsilon_B)\epsilon_A\right), \qquad 
\nu\equiv\frac{\|\mb\|\|\ma\|}{\|\mb^T\ma\|}\geq 1.
    \end{equation}
\end{theorem}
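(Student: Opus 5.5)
The plan is to imitate the proof of Theorem~\ref{t_22}: derive an exact expression for $\vhx-\vx_*$ from the perturbed equation \eqref{eq2}, then take norms. First note that $\mb^T\ma=\mb^T\mb\mc$ is nonsingular, since $\mb$ has full column rank and $\mc$ is nonsingular. So $\vx_*=(\mb^T\ma)\inv\mb^T\vb$ is well defined. Moreover, $\ma$ and $\mb$ have the same column space, so $\mb^T\ma\vx_*=\mb^T\vb$ means exactly that $\mb^T(\ma\vx_*-\vb)=\vzero$. In other words, $\vx_*$ is the least squares solution and its residual is orthogonal to $\range(\mb)$. No assumption on $\me_A,\me_B$ is needed beyond $\vhx$ satisfying \eqref{eq2}. (The dimension of $\me_A$ should be $m\times n$ so that $\ma+\me_A$ makes sense.)

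\textbf{Step 1: residual identity.} Set $\vr\equiv\ma\vhx-\vb$ and rearrange \eqref{eq2} as
\begin{align*}
(\mb+\me_B)^T\vr=-(\mb+\me_B)^T\me_A\vhx .
\end{align*}
Isolating the unperturbed term gives
\begin{align*}
\mb^T\vr=-\me_B^T\vr-(\mb+\me_B)^T\me_A\vhx .
\end{align*}

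\textbf{Step 2: express the error.} Since $\mb^T\vb=\mb^T\ma\vx_*$, the left side equals $\mb^T\ma(\vhx-\vx_*)$. Multiplying by $(\mb^T\ma)\inv$ yields
\begin{align*}
\vhx-\vx_*=-(\mb^T\ma)\inv\left(\me_B^T\vr+(\mb+\me_B)^T\me_A\vhx\right).
\end{align*}

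\textbf{Step 3: norms.} Bound the two summands separately:
\begin{align*}
\|\me_B^T\vr\|&\leq \epsilon_B\|\mb\|\,\|\vr\|,\\
\|(\mb+\me_B)^T\me_A\vhx\|&\leq(1+\epsilon_B)\|\mb\|\,\epsilon_A\|\ma\|\,\|\vhx\|.
\end{align*}
Divide by $\|\vhx\|$ and write
\begin{align*}
\|(\mb^T\ma)\inv\|\,\|\mb\|=\kappa(\mb^T\ma)\,\frac{\|\mb\|}{\|\mb^T\ma\|}=\kappa(\mb^T\ma)\,\frac{\nu}{\|\ma\|}.
\end{align*}
Here $\kappa$ of a square matrix means the usual condition number. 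This produces exactly \eqref{eqnotnorm}: the residual term appears as $\epsilon_B\|\vr\|/(\|\ma\|\|\vhx\|)$ and the second term as $(1+\epsilon_B)\epsilon_A$. Finally, $\nu\geq1$ follows from submultiplicativity, $\|\mb^T\ma\|\leq\|\mb\|\|\ma\|$.

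The only point requiring care is Step 2, namely recognizing that the exact solution satisfies $\mb^T(\ma\vx_*-\vb)=\vzero$, so that $\mb^T\vr$ is precisely $\mb^T\ma(\vhx-\vx_*)$. The rest is routine. Unlike in Theorem~\ref{t_22}, there is no need to factor $\me_A$ through $\ma^\dagger$, because here $\me_A$ appears only multiplied by $\vhx$. As a result, the $\kappa(\ma)\epsilon_A$ term of Theorem~\ref{t_22} is absent from the bound.
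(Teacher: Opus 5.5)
Your proposal is correct and matches the paper's proof essentially step for step. Both arguments rearrange \eqref{eq2} into the residual identity, use $\mb^T\vb=\mb^T\ma\vx_*$ together with the nonsingularity of $\mb^T\ma=\mb^T\mb\mc$, multiply by $(\mb^T\ma)\inv$, and factor out $\|\mb\|\|\ma\|$ to obtain $\kappa(\mb^T\ma)\,\nu$. Your side remark is also right: $\me_A$ must be $m\times n$ for $\ma+\me_A$ to make sense.
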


\begin{proof}
Rearrange \eqref{eq2} to find the computed least squares residual,
    \begin{equation*}
        (\mb+\me_B)^T\vr=-(\mb+\me_B)^T\me_A\vhx,
        \qquad \text{where}\qquad \vr\equiv \ma\vhx-\vb.
    \end{equation*}
Rearrange the left side and use the fact that $\mb^T\vb=\mb^T\ma\vx_*$,
\begin{equation}\label{eq3333}
        \mb^T\ma(\vhx-\vx_*)=-\me_B^T\vr-(\mb+\me_B)^T\me_A\vhx.
    \end{equation}
From $\ma=\mb\mc$ follows $\mb^T\ma=\mb^T\mb\mc$ where
$\mb^T\mb$ is symmetric positive definite since $\mb$ has full column rank. Thus $\mb^T\ma$, as the product of two nonsingular matrices $\mb^T\mb$ and $\mc$, is also nonsingular. 

Multiply~\eqref{eq3333} by $(\mb^T\ma)\inv$ on both sides and take norms,
\begin{equation*}
        \|\vhx-\vx_*\|\leq\|(\mb^T\ma)\inv\|\left(\|\me_B\|\|\vr\|+(\|\mb\|+\|\me_B\|)\|\me_A\|\|\vhx\|\right). 
    \end{equation*}
At last, factor out $\|\ma\|\|\mb\|$ and divide by $\|\vhx\|\neq 0$,
        \begin{equation*}
        \frac{\|\vhx-\vx_*\|}{\|\vhx\|}\leq\kappa(\mb^T\ma)\frac{\|\mb\|\|\ma\|}{\|\mb^T\ma\|}\left(\epsilon_B\frac{\|\vr\|}{\|\ma\|\|\vhx\|}+(1+\epsilon_B)\epsilon_A\right).
    \end{equation*}
\end{proof}

Theorem~\ref{l_notnorm} shows that, to first order, the relative error in $\vhx$ is bounded by
\begin{equation} \label{e_fonne}
        \frac{\|\vhx-\vx_*\|}{\|\vhx\|}\lesssim\kappa(\mb^T\ma)\left(\epsilon_B\frac{\|\ma\vhx-\vb\|}{\|\ma\|\|\vhx\|}+\epsilon_A\right).
    \end{equation}
The above bound suggests that, like the HPNE, the 
conditioniong of the not-normal equations depends on the least squares residual.

\begin{remark}[HPNE as a special case of the  not-normal equations]
If $\mb=\ma_p$ in (\ref{e_nne}) then HPNE is a special case of the not-normal equations. 

Though the two bounds are identical to first order,
Theorem~\ref{l_notnorm} does not directly reduce to Theorem~\ref{t_22}. 
The small difference comes from the inability of Theorem~\ref{l_notnorm} to account for the perturbation in the preconditioner $\mr_s$, since it assumes an additive perturbation in $\mb$.

To see this, let $\mb=\ma_p$ and $\me_B=\me_A\mr_s\inv$, so that  $\mb+\me_B=(\ma+\me_A)\mr_s\inv$. Inserting these matrices into~\eqref{eqnotnorm} gives 
 \begin{align}\label{e_almost}
         \frac{\|\vhx-\vx_*\|}{\|\vhx\|}\leq\kappa(\ma_p^T\ma)\frac{\|\ma_p\|\|\ma\|}{\|\ma_p^T\ma\|}\epsilon_A\left(\kappa(\mr_s)\frac{\|\ma\vhx-\vb\|}{\|\ma\|\|\vhx\|}+1+\kappa(\mr_s)\,\epsilon_A\right)
     \end{align}   
since $\epsilon_B=\frac{\|\me_A\mr_s\inv\|}{\|\ma_p\|}\leq\kappa(\mr_s)\epsilon_A$. If $\kappa(\ma)\approx\kappa(\mr_s)$, then \eqref{e_almost} and the HPNE bound~\eqref{e_hpnetrue} are almost the same. Moreover, the first order approximation of~\eqref{e_almost} is  
\begin{align}\label{e_almost2}
         \frac{\|\vhx-\vx_*\|}{\|\vhx\|}\lesssim\kappa(\ma_p^T\ma)\frac{\|\ma_p\|\|\ma\|}{\|\ma_p^T\ma\|}\epsilon_A
\left\{\kappa(\mr_s)\frac{\|\ma\vhx-\vb\|}{\|\ma\|\|\vhx\|},\,1\right\},
     \end{align}   
which is identical to the first order approximation~\eqref{e_fohpne} of the HPNE bound.
\end{remark}

\begin{remark}[Comparison with PNE and HPNE]\label{s_comp}
If $\mr_s$ and $\mb$ are effective preconditioners with 
$\kappa(\ma_p)\lesssim 10$ and  
$\kappa(\mb)\lesssim 10$, then 
the not-normal equations bound in Theorem~\ref{l_notnorm} resembles the 
PNE and HPNE bounds in Theorems \ref{t_55} and~\ref{t_22}, respectively.
From~\cite[Section 3]{ipsen2025solutionsquaresproblemsrandomized} 
follows 
\begin{align*}
\kappa(\mb^T\ma)\approx\kappa(\ma_p^T\ma)\approx \kappa(\ma)\approx\kappa(\mr_s).
\end{align*}
With these approximations, the first order bounds of the PNE~\eqref{e_pnefo}, HPNE~\eqref{e_fohpne} and not-normal equations~\eqref{e_fonne} are all approximately equal to
\begin{align*}
     \frac{\|\vx_*-\vhx\|}{\|\vhx\|}\lesssim
\kappa(\ma)\,\nu\,\epsilon_A\max\left\{\kappa(\ma)\frac{\|\ma\vhx-\vb\|}{\|\ma\|\|\vhx\|},1\right\}.
 \end{align*}
If also $\nu\approx 1$, then this bound resembles that of the original least squares problem in Lemma~\ref{l_ls}, suggesting that
all linear systems are well conditioned.
\end{remark}


\section{Preconditioning}\label{s_lpp}
We discuss the motivation behind the low-precision
preconditioner for PNE and HPNE (Section~\ref{mpprec}), and
present the randomized preconditioner for the numerical experiments (Section~\ref{rp}).

\subsection{A low precision preconditioner}\label{mpprec}
Theorems \ref{t_55} and~\ref{t_2}
suggest that the PNE and HPNE
conditioning depends only weakly on the perturbation in the preconditioner.
Hence we can accelerate the linear system solution by computing the preconditioner in lower precision. 

Lemma~\ref{rscond} shows that a
perturbation~$\me$ in the preconditioner has little effect on the condition number of the preconditioned matrix $\ma_p$, as long as $\me$ remains sufficiently small.

\begin{lemma}\label{rscond} 
Let $\ma\in\rmn$ with $\rank(\ma)=n$;
$\mr_s\in\rnn$ nonsingular; $\me\in\rnn$;
$\epsilon\equiv\frac{\|\me\|}{\|\mr_s\|}$;
$\|\me\|\|\mr_s^{-1}\|<1$; and  
\begin{align*}
\ma_p\equiv\ma\mr_s^{-1},\qquad \ma_1\equiv \ma(\mr_s+\me)\inv.
\end{align*}
Then
 \[\kappa(\ma_1)\leq\kappa(\ma_p)\>\frac{1+\epsilon\ \kappa(\mr_s)}{1-\epsilon\ \kappa(\mr_s)}. \]
\end{lemma}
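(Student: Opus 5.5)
We write $\ma_1$ as a multiplicative perturbation of $\ma_p$ and bound its extreme singular values separately. The Banach lemma applies because $\|\me\|\|\mr_s^{-1}\|<1$. We factor
\begin{align*}
\mr_s+\me=(\mi+\me\mr_s^{-1})\mr_s ,
\end{align*}
which gives
\begin{align*}
\ma_1=\ma\mr_s^{-1}(\mi+\me\mr_s^{-1})^{-1}=\ma_p\mf^{-1},\qquad \mf\equiv \mi+\me\mr_s^{-1}.
\end{align*}
The key scalar is $\delta\equiv\|\me\mr_s^{-1}\|\leq\|\me\|\|\mr_s^{-1}\|=\epsilon\,\kappa(\mr_s)<1$.

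First, $\mf$ is nonsingular with $\|\mf\|\leq 1+\delta$ and $\|\mf^{-1}\|\leq 1/(1-\delta)$ by the Banach lemma. Next we bound the norm of $\ma_1$ directly:
\begin{align*}
\|\ma_1\|\leq\|\ma_p\|\,\|\mf^{-1}\|\leq \frac{\|\ma_p\|}{1-\delta}.
\end{align*}

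Then we handle the pseudoinverse. Since $\ma_1$ has full column rank, $\|\ma_1^\dagger\|=1/\sigma_n(\ma_1)$. For any unit vector $\vx$, write $\vx=\mf\mf^{-1}\vx$, so that
\begin{align*}
\|\ma_1\vx\|=\|\ma_p\mf^{-1}\vx\|\geq \sigma_n(\ma_p)\|\mf^{-1}\vx\|\geq \frac{\sigma_n(\ma_p)}{\|\mf\|}.
\end{align*}
The last step uses $1=\|\vx\|\leq\|\mf\|\|\mf^{-1}\vx\|$. Hence $\|\ma_1^\dagger\|\leq(1+\delta)\|\ma_p^\dagger\|$.

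Multiplying the two bounds gives $\kappa(\ma_1)\leq\kappa(\ma_p)\frac{1+\delta}{1-\delta}$. The function $\delta\mapsto(1+\delta)/(1-\delta)$ is increasing on $[0,1)$ and $\delta\leq\epsilon\kappa(\mr_s)<1$, so the stated bound follows.

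The main point needing care is the pseudoinverse step. One should not write $\ma_1^\dagger=\mf\,\ma_p^\dagger$ without justification, since right-multiplication by a nonsingular matrix does not commute with $\dagger$ in general. The singular-value argument above avoids that issue entirely.
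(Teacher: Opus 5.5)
Your proof is correct and follows essentially the same route as the paper. You use the same factorization $\ma_1=\ma_p(\mi+\me\mr_s^{-1})^{-1}$ and the same Banach-lemma bound on $\|\ma_1\|$; the only difference is that you bound $\|\ma_1^\dagger\|$ through $\sigma_n(\ma_1)\geq\sigma_n(\ma_p)/\|\mf\|$, whereas the paper uses the identity $\ma_1^\dagger=\mf\,\ma_p^\dagger$.

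Your closing caveat about that identity does not apply here. Since $\ma_p$ has full column rank and $\mf$ is nonsingular, it follows in one line from $\ma^\dagger=(\ma^T\ma)^{-1}\ma^T$:
$(\ma_p\mf^{-1})^\dagger=\bigl(\mf^{-T}\ma_p^T\ma_p\mf^{-1}\bigr)^{-1}\mf^{-T}\ma_p^T=\mf(\ma_p^T\ma_p)^{-1}\ma_p^T=\mf\,\ma_p^\dagger$.
Your warning is relevant only when the left factor is rank deficient, which is not the case in this lemma.
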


\begin{proof} 
Since $\|\me\|\|\mr_s^{-1}\|<1$, the Banach lemma \cite[Lemma 2.3.3]{GovL13} implies that $\mr_s+\me$ is nonsingular,
so that $\ma_1$ is well defined. 
To relate $\ma_1$ to $\ma_p$, factor out $\mr_s^{-1}$,
\begin{align*}
\ma_1=\ma\mr_s^{-1}(\mi+\me\mr_s^{-1})^{-1}
=\ma_p(\mi+\me\mr_s^{-1})^{-1}.
\end{align*}
The Banach lemma \cite[Lemma 2.3.3]{GovL13} also implies 
\begin{align}\label{eqbound1}
\|\ma_1\|\leq 
\frac{\|\ma_p\|}{1-\|\me\|\|\mr_s^{-1}\|}
=\frac{\|\ma_p\|}{1-\epsilon\ \kappa(\mr_s)}.
\end{align}
From $\ma_1$ having full column rank follows that its
left inverse equals
\begin{align*}
\ma_1^{\dagger}=(\mi+\me\mr_s^{-1})\ma_p^{\dagger}.
\end{align*}
Take norms,
\begin{align*}
\|\ma_1^{\dagger}\|\leq 
\|\ma_p^{\dagger}\|\ (1+\|\me\|\|\mr_s^{-1}\|)
=\|\ma_p^{\dagger}\|\ (1+\epsilon\ \kappa(\mr_s)),
\end{align*}
and combine the above with \eqref{eqbound1}, 
\begin{align*}
\kappa(\ma_1)=\|\ma_1\|\|\ma_1^{\dagger}\|\leq 
\|\ma_p\|\|\ma_p^{\dagger}\|
\frac{1+\epsilon\ \kappa(\mr_s)}{1-\epsilon\ \kappa(\mr_s)}
=\kappa(\ma_p)\ \frac{1+\epsilon\ \kappa(\mr_s)}{1-\epsilon\ \kappa(\mr_s)}.
\end{align*}
\end{proof}

 Lemma~\ref{rscond} shows that if $\ma_p$ is well conditioned and if $\epsilon<\kappa(\mr_s)^{-1}$, then
 $\ma_1$ is also well conditioned. 
 The numerical experiments in Section~\ref{s_num} confirm
 that $\kappa(\ma_1)\lesssim 10$ as long as $\epsilon\ll \kappa(\mr_s)\inv \approx \kappa(\ma)\inv$. 
We control the size of $\epsilon$
 by adjusting the precision in which the preconditioner is computed.

 Algorithm~\ref{alg_5} presents a pseudo code for solving the PNE or HPNE with a preconditioner computed in lower precision.
The precision is automatically selected 
 by a fast condition number estimate 
 $\kappa_0\approx \log_{10}(\kappa(\ma))$
 computed in single precision. We use the Hager 1-norm condition number estimator $\verb|condest|$ in Matlab and the CONDITION package~\cite{burkardt2012condition} in C\texttt{++} to estimate the condition number of $\ma$. In particular, 
 we use the result from \cite[Exercise 2, Section 2.6]{IIbook} and estimate the 1 norm condition number of $\ma^T\ma$ to find the upper bound
 $$\kappa(\ma)^2=\kappa(\ma^T\ma)\leq n\|\ma^T\ma\|_1\|(\ma^T\ma)^{-1}\|_1.$$ 
 The selected precision is half precision if $\kappa_0<4$, and is single precision if $\kappa_0<8$. 
If $\kappa_0$ overflows or is greater than 8, the selected precision is double precision. If $\kappa(\ma)$ is known in advance, than the same precision selection heuristic applies without the need for estimation. 
 
  It is important to promote
 the preconditioner back to double precision prior to preconditioning~$\ma$.
 Experiments illustrate that the solution accuracy suffers if 
 the computation of $\ma\mr_s^{-1}$ happens in low precision.

\begin{algorithm}
\caption{PNE/HPNE with randomized low precision preconditioner}\label{alg_5}
\begin{algorithmic}
\Require $\ma\in\rmn$ with $\rank(\ma)=n$, $\vb\in\real^m$, sketching matrix $\mathbf{\Omega}\in\real^{d\times m}$ from~\eqref{e_sketch}. 
\Ensure Solution of PNE or HPNE  
\medskip
\State Estimate $\kappa_0\approx \log_{10}(\kappa(\ma))$ in single precision.\\
\medskip\qquad 
IF $\kappa_0<4, \vu_p:=\mathrm{half~precision}$\\
\medskip\qquad
ELSE IF $\kappa_0\leq 8, \vu_p:=\mathrm{single~precision}$\\
\medskip\qquad
ELSE  $\vu_p:=\mathrm{double~precision}$\\
\State Compute preconditioner in precision $\vu_p$.
\smallskip
\State{$\qquad$ Sketch $\ma_s= \mathbf{\Omega}\ma$} 
\smallskip
\State{$\qquad$ Thin QR factorization $\ma_s=\mq_s\mr_s$}
\smallskip
\State $\qquad$ Promote $\mr_s$ from precision $\vu_p$ to double precision. 
\medskip
\State Perform remaining computations in double precision.
\smallskip
\State{$\qquad$ Solve $\ma_p=\ma\mr_s^{-1}$} \qquad 
\smallskip
\State{$\qquad$ Solve PNE~\eqref{e_pne} or HPNE~\eqref{e_hpne}}
\end{algorithmic}
\end{algorithm}

\subsection{A randomized preconditioner}\label{rp}
The ideal preconditioner is the upper triangular matrix 
$\mr\in\rnn$ from a thin QR decomposition $\ma=\mq\mr$, where
$\mq\in\rmn$ has orthonormal columns. However, the computation of $\mr$ can be too expensive if $\ma$ has large dimension. 

A popular approach \cite{Blendenpik,carson2025,epperly2025fastrandomizedleastsquaressolvers,garrisonipsen} reduces the cost of computing a randomized preconditioner without a sacrifice in accuracy. 
This is done by sketching $\ma$ with a random matrix $\mathbf{\Omega}\in\real^{d\times m}$ 
to produce the smaller matrix $\mathbf{\Omega}\ma=\mq_s\mr_s$.
With an appropriate choice of $\mathbf{\Omega}$, 
the preconditioned matrix $\ma_p\equiv\ma\mr_s^{-1}$ is likely 
to have a low condition number.

Motivated by the least squares solver Blendenpik~\cite{Blendenpik},
the numerical experiments in Section~\ref{s_num} 
use a subsampled trigonometric transform as the sketching matrix,
\begin{align}\label{e_sketch}
    \mathbf{\Omega}\equiv\ms\mf\md,
\end{align}
where $\mf\in\mathbb{C}^{m\times m}$ is a Fourier Transform, 
$\md\in\rmm$ is a diagonal matrix whose diagonal elements 
are equal to -1 or 1 with equal probability 1/2, and the rows
of  $\ms\in\real^{d\times m}$ are sampled from $\mi_m$ uniformly and with replacement. 
The purpose of the randomized transform $\mf\md$ is to 
ensure that $\mf\md\ma$ has an optimal coherence $\mu\approx n/m$. 

Among the probabilistic bounds for the condition number of the preconditioned matrix based on (\ref{e_sketch}) \cite{Blendenpik,ipsen2025solutionsquaresproblemsrandomized,IpsW12,RT08}, we choose the following.

\begin{lemma}[Theorem 4.1 in \cite{ipsen2025solutionsquaresproblemsrandomized}]
Let $\ma\in\rmn$ with $\rank(\ma)=n$ and thin QR factorization $\ma=\mq\mr$, where $\mq\in\rmn$ with $\mq^T\mq=\mi_n$. Let $\ms$ in (\ref{e_sketch}) sample $d$ rows uniformly
and with replacement, and let $\mf\md\mq$ have coherence
$\mu\equiv \max_{1\leq i\leq m}{\|\ve_i^T\mf\md\mq\|_2^2}$.

For any $0<\epsilon<1$ and $0<\delta<1$, if $d\geq 2m\mu (1+\tfrac{\epsilon}{3})\tfrac{\ln(n/\delta)}{\epsilon^2}$
then with probability at least $1-\delta$
\begin{equation}\label{e_condAp} 
\kappa(\ma_p)\leq \sqrt{\frac{1+\epsilon}{1-\epsilon}}.
\end{equation}
\end{lemma}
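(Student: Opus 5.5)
The statement is a sampling-based subspace embedding result, so I would reduce $\kappa(\ma_p)$ to the extreme singular values of a sampled orthonormal matrix and then control those with a matrix Chernoff or Bernstein inequality.

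\textbf{Step 1 (reduction to an orthonormal factor).} Write $\ma=\mq\mr$. The sketch is $\mathbf{\Omega}\ma=\ms\mf\md\mq\mr$, with thin QR factorization $\mq_s\mr_s$. Hence $\ma_p=\ma\mr_s^{-1}=\mq\mr\mr_s^{-1}$, and since $\mq$ has orthonormal columns, $\kappa(\ma_p)=\kappa(\mr\mr_s^{-1})$. Next set $\mathbf{U}\equiv\mf\md\mq$, which has orthonormal columns because $\mf\md$ is unitary (with the transform suitably normalized). Then $\ms\mathbf{U}\mr=\mq_s\mr_s$, so $\ms\mathbf{U}=\mq_s(\mr_s\mr^{-1})$. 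Therefore the singular values of $\mr_s\mr^{-1}$ equal those of $\ms\mathbf{U}$, and
\[
\kappa(\ma_p)=\kappa(\ms\mathbf{U}).
\]
It then suffices to show that the eigenvalues of $(\ms\mathbf{U})^*(\ms\mathbf{U})$ lie in $[1-\epsilon,1+\epsilon]$, once $\ms$ is scaled by $\sqrt{m/d}$. The scaling does not affect $\kappa$.

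\textbf{Step 2 (sum of independent matrices).} With $\ms$ scaled, $(\ms\mathbf{U})^*\ms\mathbf{U}=\sum_{j=1}^d \mx_j$, where $\mx_j=\tfrac{m}{d}\mathbf{U}^*\ve_{k_j}\ve_{k_j}^T\mathbf{U}$ and the indices $k_j$ are i.i.d.\ uniform on $\{1,\dots,m\}$. Each summand is Hermitian positive semidefinite with $\mathbb{E}\,\mx_j=\tfrac1d\mathbf{U}^*\mathbf{U}=\tfrac1d\mi_n$. By the definition of coherence, $\|\mx_j\|\le m\mu/d$.

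\textbf{Step 3 (concentration).} Apply the matrix Bernstein inequality to the centered summands $\mathbf{Y}_j=\mx_j-\tfrac1d\mi$.

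\begin{itemize}
\item The summands are bounded: $\|\mathbf{Y}_j\|\le m\mu/d\equiv L$, using $m\mu\ge n\ge1$.
\item The variance is controlled: $\sum_j\mathbb{E}\mathbf{Y}_j^2\preceq\sum_j\mathbb{E}\mx_j^2\preceq \tfrac{m\mu}{d}\sum_j\mathbb{E}\mx_j=\tfrac{m\mu}{d}\mi$, so $\sigma^2\le m\mu/d$.
\end{itemize}

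Bernstein then gives
\[
\mathbb{P}\bigl(\|\textstyle\sum_j\mathbf{Y}_j\|\ge\epsilon\bigr)\le 2n\exp\!\Bigl(-\frac{\epsilon^2/2}{\sigma^2+L\epsilon/3}\Bigr)
=2n\exp\!\Bigl(-\frac{d\,\epsilon^2}{2m\mu(1+\epsilon/3)}\Bigr).
\]
Setting this at most $\delta$ and solving for $d$ yields the stated sample size.

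\textbf{Main obstacle.} The hard step is matching the exact constant: the hypothesis has $\ln(n/\delta)$ rather than $\ln(2n/\delta)$. To get it, I would use one-sided Chernoff bounds for each tail and check that their combined failure probability fits within $\delta$ under the stated condition. Failing that, I would use the Bernstein variant for PSD summands with intrinsic dimension, which removes the factor $2$.

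Once the eigenvalues lie in $[1-\epsilon,1+\epsilon]$, the singular values of $\ms\mathbf{U}$ lie in $[\sqrt{1-\epsilon},\sqrt{1+\epsilon}]$. This gives \eqref{e_condAp}, and in particular $\ms\mathbf{U}$ has full rank, so $\mr_s$ is nonsingular.
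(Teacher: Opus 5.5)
The paper gives no proof of this lemma; it quotes Theorem~4.1 of \cite{ipsen2025solutionsquaresproblemsrandomized}. Your Steps~1--3 are the standard route for results of this kind and are correct as far as they go. The reduction $\kappa(\ma_p)=\kappa(\ms\mf\md\mq)$ holds, the bounds $\|\mathbf{Y}_j\|\le m\mu/d$ and $\sum_j\mathbb{E}\,\mathbf{Y}_j^2\preceq (m\mu/d)\,\mi$ hold, and the Bernstein estimate follows from them. However, what this argument proves is the lemma with $\ln(2n/\delta)$ in place of $\ln(n/\delta)$. Equivalently, under the stated condition on $d$ it only gives success probability at least $1-2\delta$. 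The step you call the main obstacle is therefore a genuine gap, and neither of your remedies closes it.

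\emph{One-sided Chernoff bounds.} Each tail still carries the dimensional factor $n$. With $R=m\mu/d$,
\begin{equation*}
\mathbb{P}\bigl(\lambda_{\max}\ge 1+\epsilon\bigr)\le n\,e^{-h_+(\epsilon)/R},\qquad
\mathbb{P}\bigl(\lambda_{\min}\le 1-\epsilon\bigr)\le n\,e^{-h_-(\epsilon)/R},
\end{equation*}
where $h_+(\epsilon)=(1+\epsilon)\ln(1+\epsilon)-\epsilon$ and $h_-(\epsilon)=\epsilon+(1-\epsilon)\ln(1-\epsilon)$. At $d=2m\mu(1+\epsilon/3)\ln(n/\delta)/\epsilon^2$ the two bounds equal $\delta\,(\delta/n)^{c_\pm-1}$, with $c_\pm=2(1+\epsilon/3)h_\pm(\epsilon)/\epsilon^2$. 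Here $c_+=1+\epsilon^2/18+O(\epsilon^3)$ and $c_-=1+2\epsilon/3+O(\epsilon^2)$. Hence their sum tends to $2\delta$ as $\epsilon\to 0$, and for small $\epsilon$ it does not fit within $\delta$.

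\emph{Intrinsic-dimension inequalities.} These do not remove the factor $2$. They replace $n$ by the intrinsic dimension of the mean or variance proxy, but at the cost of a larger constant in front ($2$ per tail for the Chernoff version, $4$ for the Bernstein version). Here the mean $\mathbb{E}\sum_j\mathbf{X}_j=\mi_n$ has intrinsic dimension exactly $n$. In the incoherent regime that $\mf\md$ is designed to produce (all row norms of $\mf\md\mq$ equal to $n/m$), the variance is $\sum_j\mathbb{E}\,\mathbf{Y}_j^2=\frac{n-1}{d}\,\mi_n$, again of intrinsic dimension $n$. So the prefactor gets worse, not better.

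What your argument actually establishes is the $\ln(2n/\delta)$ version. This changes the required $d$ only by the additive term $2m\mu(1+\epsilon/3)\ln 2/\epsilon^2$, which is immaterial for how the paper uses the lemma, and you should state it that way. To obtain the stated $\ln(n/\delta)$, you would have to reproduce the specific inequality used in the cited source. The standard two-sided matrix Bernstein and Chernoff bounds all have prefactor $2n$.
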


Section~\ref{s_num} illustrates that a sampling amount of $d=3n$ tends to produce preconditioned matrices with $\kappa(\ma_p)\lesssim 10$.
 
\section{Numerical Experiments}\label{s_num}
After describing the setup of the  experiments (Section~\ref{s_expsetup2}), we present numerical experiments for the tightness of the PNE and HPNE perturbation bounds 
in double precision (Section~\ref{s_num1})
and in mixed precision (Section~\ref{exp_accuracy}); and the speed and accuracy of a low precision
preconditioner on an NVIDIA H100 GPU (Section~\ref{exp_HPC}).
To make randomized sampling effective, the 
 matrices $\ma\in\rmn$ are tall and skinny with $m\leq 2^{17}\approx10^5$ rows and $n\leq 2000$ columns.

\begin{algorithm}[!t]
\caption{Constructing exact least squares problems}\label{alg_exact}
\begin{algorithmic}
\Require Matrix dimensions $m$ and $n$, condition number $\kappa$
\State $\qquad\ $  Least squares residual norm $\rho$
\Ensure Matrix $\ma\in\rmn$ with $\kappa(\ma)=\kappa$,
right hand side $\vb\in\real^m$
\State $\qquad\quad$ Solution $\vx_*\in\rn$ with $\|\vx_*\|=1$
\State $\qquad\quad$ Least squares residual $\ve\equiv \vb-\ma\vx_*\in\real^m$
with $\|\ve\|=\rho$ 
\medskip

\State \Comment{Compute $\ma$}
\State Compute orthogonal matrix $\mq=\begin{bmatrix}\mq_1&\mq_2\end{bmatrix}\in\rmm$ with $\mq_1\in\rmn$
\State Compute upper triangular matrix $\mr\in\rnn$ with $\kappa(\mr)=\kappa$ and $\|\mr\|=1$
\State Multiply $\ma=\mq_1\mr\qquad$
\Comment{Thin QR with $\range(\mq_1)=\range(\ma)$}
\medskip

\State \Comment{Compute solution $\vx_*$ with $\|\vx_*\|=1$}
\State $\vx = \texttt{randn}(n, 1)\qquad$ \Comment{Standard random normal vector}
\State $\vx_*=\vx/\|\vx\|$
\medskip

\State \Comment{Compute least squares residual}
\State $\ve_r= \mq_2\mq_2^T\ \texttt{randn}(m,1)\qquad$
\Comment{noisevector $\ve_r$ orthogonal to $\range(\ma)$}
\State $\ve = \rho\,\ve_{r}/\|\ve_{r}\|\qquad$
 \Comment{Absolute residual  norm $\|\ma\vx_*-\vb\|=\rho$}
 \medskip
 
 \Comment{Compute righthand side $\vb$}
\State $\vb = \ma\vx_*+\ve\qquad $
\end{algorithmic}
\end{algorithm}

\subsection{Setup}\label{s_expsetup2}
Algorithm~\ref{alg_exact} presents Matlab pseudocode for 
constructing 'exact' least squares problems as motivated by \cite[Section 1.5]{MNTW24}. 
Since $\|\ma\|=\|\vx_*\|=1$ by construction, the absolute least squares residuals $\|\ma\vx_*-\vb\|$ are equal to the relative least squares residuals. 
The subsequent figures plot relative errors against least squares residuals whose norms vary from $10^{-16}$ to 1. 
We believe that these different least squares residuals are responsible for causing the slight fluctuations in the solution accuracy. 

The subsampled trigonometric transform $\mathbf{\Omega}$
in  (\ref{e_sketch}) samples $d=3n$ rows. 
Linear systems are solved with Matlab's \verb|mldivide| command.
We summarize the perturbation bounds below, and state them with general precisions $u_1$ and~$u_2$.
The old bounds are expressed in terms of
\begin{align*}
\eta_1 \equiv \left|\frac{\kappa(\mr_s)u_1}{1-\kappa(\mr_s)\,u_1}\right|,
\end{align*}
where the absolute value ensures that $\eta_1$ remains positive 
even if $\kappa(\mr_s)\,u_1>1$.

\begin{enumerate}
\item Old PNE bound in Theorem~\ref{t_2},
\begin{align}
     \frac{\|\vx_*-\vhx\|}{\|\vhx\|} \leq \kappa(\mr_s)\,\kappa(\ma_p)\frac{\|\mr_s\vhx\|}{\|\mr_s\| \|\vhx\|}
\left(u_2+\kappa(\ma_p)\, \eta_1
\left(\frac{\|\ma_p\vhy-\vb\|}{\|\ma_p\|\|\vhy\|}+u_2\right)\right). \label{eq31}
\end{align}
\item Improved PNE bound in Theorem~\ref{t_55}, 
\begin{align}
     \frac{\|\vx_*-\vhx\|}{\|\vhx\|} \leq \kappa(\mr_s)\kappa(\ma_p)u_2\left(\kappa(\ma_p)\kappa(\mr_s)\frac{\|\ma\vhx-\vb\|}{\|\ma\|\|\vhx\|}+ 1+\kappa(\ma)u_2 \right). \label{eq41}
\end{align}
\item Old HPNE bound in Theorem~\ref{t_2},
\begin{align}
     \frac{\|\vx_*-\vhx\|}{\|\vhx\|} \leq \kappa(\ma_p^T\ma)\, \frac{\|\ma_p\|\|\ma\|}{\|\ma_p^T\ma\|} 
\left(\eta_1\frac{\|\vb-\ma\vhx\|}{\|\ma\|\|\vhx\|}+(1+\eta_1)u_2\right). \label{eq42}
\end{align}
\item Improved HPNE bound in Theorem~\ref{t_22},
\begin{align}
 \frac{\|\vx_*-\vhx\|}{\|\vhx\|} \leq \kappa(\ma_p^T\ma)\frac{\|\ma_p\|\|\ma\|}{\|\ma_p^T\ma\|}  u_2\left(\kappa(\mr_s)\frac{\|\ma\vhx-\vb\|}{\|\ma\|\|\vhx\|}+1+\kappa(\ma)u_2\right).\label{eq32}
\end{align}
\end{enumerate}

\subsection{Double Precision}\label{s_num1}
We illustrate the tightness of the perturbation bounds 
and the accuracy for
PNE (Figure~\ref{fig:PNEacc1001}) and
HPNE (Figure~\ref{fig:HPNEacc1001}) in double precision. 
Since analogous results hold in single precision, we omit them here for brevity.

The matrices
$\ma$ have $m=6,000$ rows, $n=100$ columns, and condition number 
$\kappa(\ma)=10^4$. 
The PNE bounds \eqref{eq31} and~\eqref{eq41} in Figure~\ref{fig:PNEacc1001}, and the HPNE bounds \eqref{eq42} and~\eqref{eq32} in Figure~\ref{fig:HPNEacc1001} use the precision
$u_1=u_2=2^{-52}$.

\begin{figure}[htbp]  
 \centering
        \includegraphics[width=0.48\linewidth]{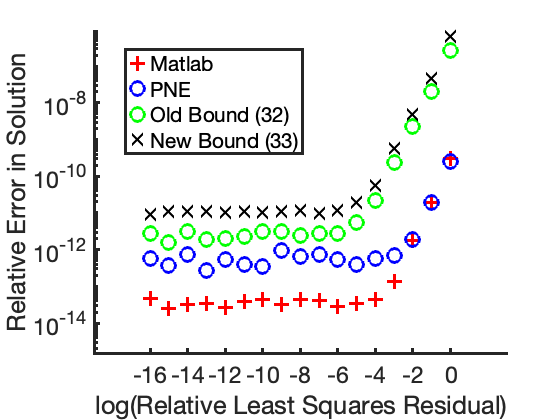}
       \caption{Relative errors in the computed solutions $\vhx$ and perturbation bounds versus logarithm of relative least squares residuals $\|\vb-\ma\vx_*\|/(\|\ma\|\|\vx_*\|)$ for matrices $\ma\in\real^{6,000\times 100}$ with condition number $\kappa(\ma)=10^4$. Shown are the errors in the Matlab backslash solutions (red plusses); the solutions from the PNE (blue squares); the  old bound~\eqref{eq31} (green circles) and the new bound~\eqref{eq41} (black x) with precision $u_1=u_2=2^{-52}$. }
\label{fig:PNEacc1001}
\end{figure}

\textbf{Figure~\ref{fig:PNEacc1001}.} The old PNE bound~\eqref{eq31} from Theorem~\ref{t_3} overestimates the error, but it is tighter than the new bound 
\eqref{eq41} from Theorem~\ref{t_55}. This is due
to the absence of the scaling factor $\frac{\|\mr_s\vhx\|}{\|\mr_s\| \|\vhx\|}\leq1$ 
in \eqref{eq41}. For a large enough least squares residuals,
$\|\ma\vx_*-\vb\|/(\|\ma\|\|\vx_*\|)>10 ^{-4}$, the PNE solution is as accurate as the one from Matlab's $\verb|mldivide|$.

\begin{figure}[htbp]
    \centering

        \centering
        \includegraphics[width=0.48\linewidth]{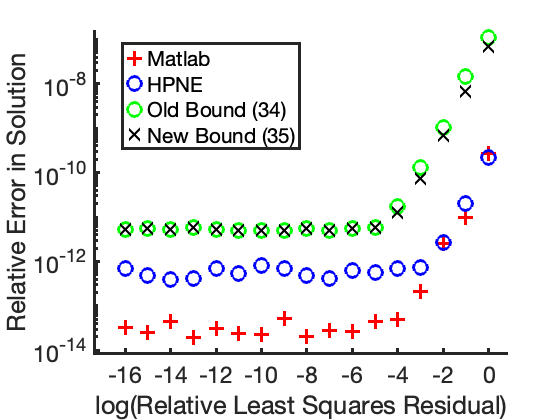}
      
    \caption{Relative errors in the computed solutions $\vhx$ and perturbation bounds versus logarithm of relative least squares residuals $\|\vb-\ma\vx_*\|/(\|\ma\|\|\vx_*\|)$ for matrices $\ma\in\real^{6,000\times 100}$ with condition number $\kappa(\ma)=10^4$. Shown are the errors in the Matlab backslash solutions (red plusses); the solutions from the HPNE (blue squares); the old bound~\eqref{eq42}  (green circles) and the new bound~\eqref{eq32} (black x)  with precision $u_1=u_2=2^{-52}$.}
     \label{fig:HPNEacc1001}
\end{figure}

\textbf{Figure~\ref{fig:HPNEacc1001}.}
The old bound~\eqref{eq42} from Theorem~\ref{t_2} and the new bound~\eqref{eq32} from Theorem~\ref{t_22} are essentially the same. 
For large enough least squares residuals,
$\|\ma\vx_*-\vb\|/(\|\ma\|\|\vx_*\|)>10^{-4}$, the HPNE 
solution is as accurate as the one from Matlab's $\verb|mldivide|$.

\subsection{Mixed Precision}\label{exp_accuracy}
We illustrate the tightness of the perturbation bounds 
and the accuracy for
PNE (Figure~\ref{fig:combined_PNE}) and
HPNE (Figure~\ref{fig:HPNE_combined}) in mixed precision.

The matrices $\ma$ have $m=6,000$ rows, and
$n=100$ or $n=1000$ columns. The condition number is $\kappa(\ma)=10^8$, where the normal equations fail in double precision while a single precision preconditioner can still be effective enough to achieve $\kappa(\ma_p)\leq 10$.   
The PNE and HPNE are solved with Algorithm~\ref{alg_5},
with 
precisions $u_1=2^{-23}$ and $u_2=2^{-52}$;  the same as
the PNE bounds \eqref{eq31} and~\eqref{eq41} in Figure~\ref{fig:combined_PNE}, and the HPNE bounds \eqref{eq42} and~\eqref{eq32} in Figure~\ref{fig:HPNE_combined}.

\textbf{Figure~\ref{fig:combined_PNE}.} 
The new bound~\eqref{eq41} from Theorem~\ref{t_55} 
reflects the qualitative behavior of the error, whereas the old bound~\eqref{eq31} from Theorem~\ref{t_3} is a severe overestimate.
For sufficiently small least squares residuals, 
$\|\ma\vx_*-\vb\|/(\|\ma\|\|\vx_*\|)<10^{-6}$
the relative error
in the PNE solution is 100 larger than that 
of  Matlab's $\verb|mldivide|$,
while for larger least squares residuals, the 
PNE solution is as accurate.

\begin{figure}[htbp]
    \centering
    \begin{subfigure}[b]{0.48\linewidth}
        \centering
        \includegraphics[width=\linewidth]{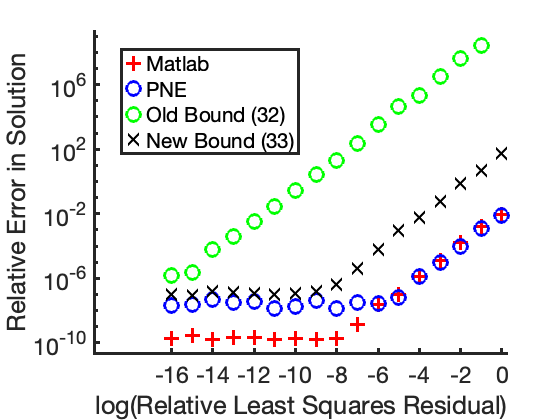}
        \caption{$\ma\in\real^{6,000\times 100}$}
        \label{fig:PNEacc100}
    \end{subfigure}
    \hfill 
    \begin{subfigure}[b]{0.48\linewidth}
        \centering
        \includegraphics[width=\linewidth]{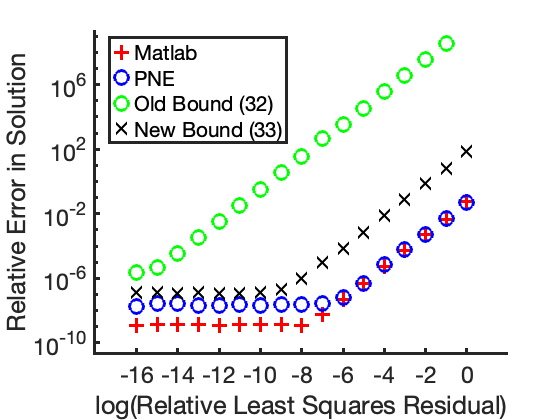}
        \caption{$\ma\in\real^{6,000\times 1000}$}
        \label{fig:PNEacc1000}
    \end{subfigure}

    \caption{Relative errors in the computed solutions $\vhx$ and perturbation bounds versus logarithm of relative least squares residuals $\|\vb-\ma\vx_*\|/(\|\ma\|\|\vx_*\|)$ for matrices with condition number $\kappa(\ma)=10^8$. Shown are the errors in the Matlab backslash solutions (red plusses); the solutions from the PNE with a single precision preconditioner (blue squares); the old bound~\eqref{eq31} (green circles) and the new bound~\eqref{eq41} (black x) with 
    precisions $u_1=2^{-23}$ and $u_2=2^{-52}$.}
    \label{fig:combined_PNE}
\end{figure}

\textbf{Figure~\ref{fig:HPNE_combined}.} 
 The new bound~\eqref{eq32} from Theorem~\ref{t_22} 
 reflects the qualitative behavior of the error, whereas the old bound~\eqref{eq42} from Theorem~\ref{t_2} 
 is a severe overestimate.
 For sufficiently small least squares residuals,
$\|\ma\vx_*-\vb\|/(\|\ma\|\|\vx_*\|)< 10^{-6}$, 
the relative error in the HPNE solution is 100 times larger
than that of Matlab's $\verb|mldivide|$, while for larger
least squares residuals, the HPNE solution is as accurate.

\begin{figure}[htbp]
    \centering
    \begin{subfigure}[b]{0.48\linewidth}
        \centering
        \includegraphics[width=\linewidth]{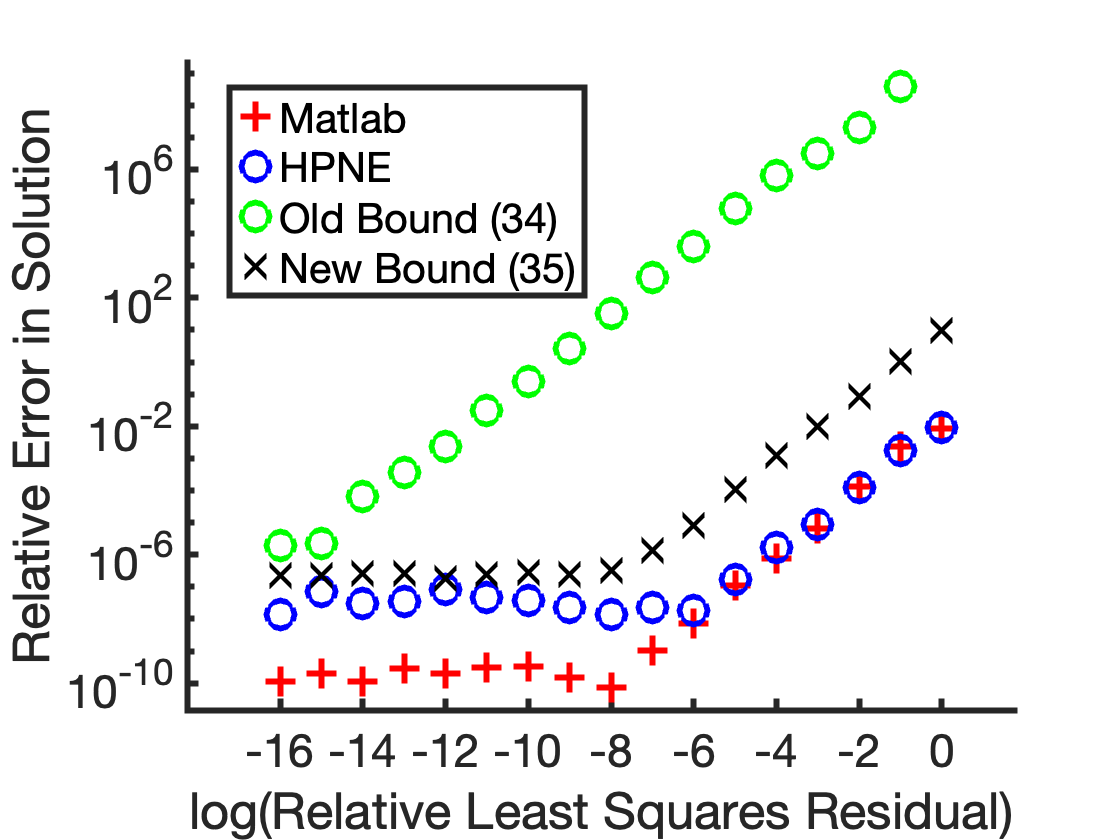}
        \caption{$\ma\in\real^{6,000\times 100}$}
        \label{fig:HPNEacc100}
    \end{subfigure}
    \hfill
    \begin{subfigure}[b]{0.48\linewidth}
        \centering
        \includegraphics[width=\linewidth]{HPNE1000.png}
        \caption{$\ma\in\real^{6,000\times 1000}$}
        \label{fig:HPNEacc1000}
    \end{subfigure}

    \caption{Relative errors in the computed solutions $\vhx$ and perturbation bounds versus logarithm of relative least squares residuals $\|\vb-\ma\vx_*\|/(\|\ma\|\|\vx_*\|)$ for matrices with condition number $\kappa(\ma)=10^8$. Shown are the errors in the Matlab backslash solutions (red plusses); the solutions from the HPNE with a single precision preconditioner (blue squares); the old bound~\eqref{eq42} (green circles) and the new bound~\eqref{eq32} (black x) with 
    precisions $u_1=u^{-23}$ and $u_2=u^{-52}$. }
    \label{fig:HPNE_combined}
\end{figure}

\subsection{Mixed Precision on a GPU}\label{exp_HPC}
We compare mixed precision PNE/HPNE with double precision PNE/HPNE on a GPU. We examine speedup in~Figure~\ref{fig:GPUcomparison2} and the accuracy in Figure~\ref{fig:GPUcomparisonacc2}.

The experiments  were performed in C$\texttt{++}$ on an NVIDIA H100 PCIe GPU with 80 GB of HBM3 memory, utilizing CUDA 12.6; vendor-optimized libraries such as cuBLAS, cuSOLVER, and cuFFT; and the open-source C\texttt{++} parallel algorithm library Thrust.
We compare the PNE and HPNE to a standard QR-based solver based on optimized \verb|geqrf| and \verb|orgqr| routines, and \verb|trsm| 
for solving triangular systems.
Wall clock times are measured with the CUDA\verb| gettimeofday| function.
 
Half precision floating point numbers are stored in
the CUDA \verb|__half| data type. Due to limitations
of cuFFT in half precision, the row dimensions $m$ of $\ma$ must be a power of two.

\begin{figure}[h!]
    \centering
    \begin{subfigure}[b]{0.48\linewidth}
        \centering
        \includegraphics[width=\linewidth]{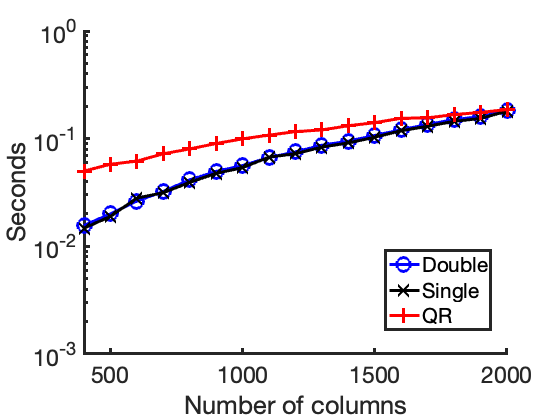}
        \caption{Average run times. }
        \label{fig:100000GPUsingraw}
    \end{subfigure}
    \hfill
    \begin{subfigure}[b]{0.48\linewidth}
        \centering
        \includegraphics[width=\linewidth]{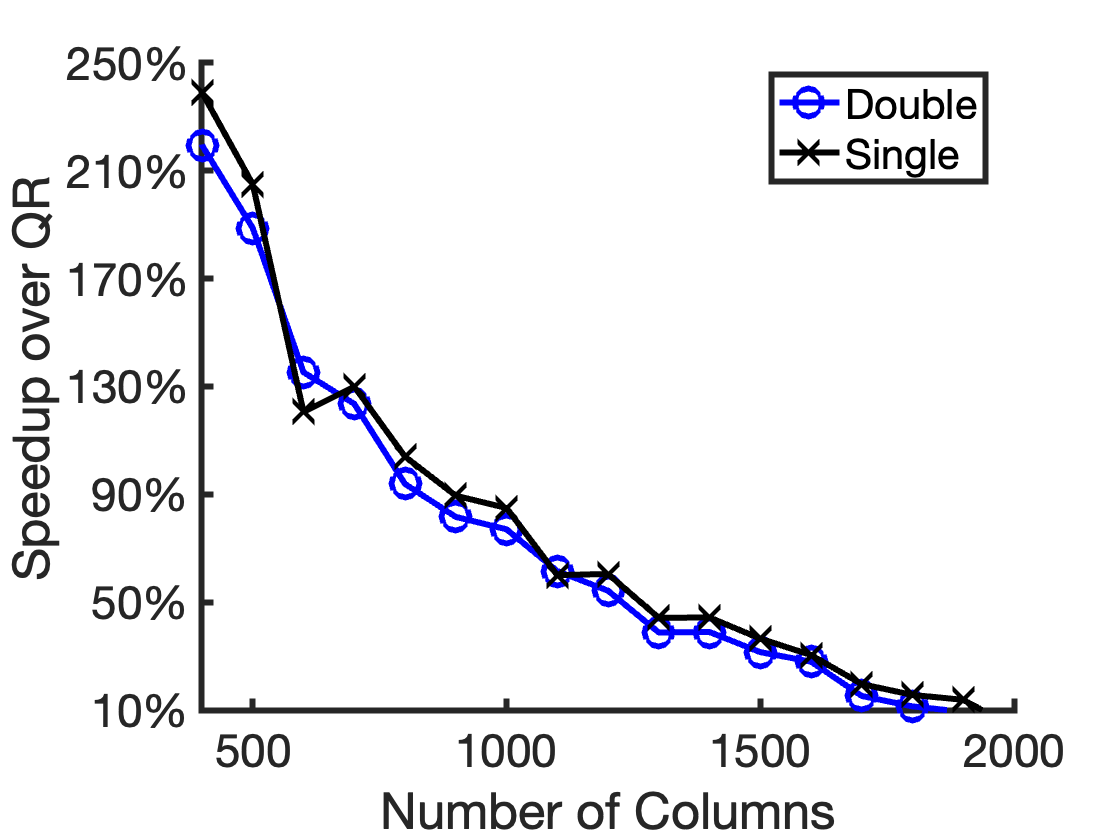}
        \caption{Average speedups over QR solver. }
        \label{fig:100000GPUspeedup}
    \end{subfigure}
    \caption{Average run times and relative speedups in seconds, over 10 trials, versus number of columns $400\leq n\leq 2000$
    for matrices $\ma\in\rmn$ with $m=10^5$ rows.  Shown are times for double precision PNE (blue circles), mixed precision PNE (black crosses), and QR solver (red plusses).}
    \label{fig:GPUcomparison2}
\end{figure}

\textbf{Figure~\ref{fig:GPUcomparison2}.} 
The matrices $\ma\in\rmn$ in Figure~\ref{fig:GPUcomparison2} have $m=10^5$ rows, $400\leq n\leq 2000$ columns, condition number $\kappa(\ma)=10^7$ and least squares residuals $\|\ma\vx_*-\vb\|/(\|\ma\|\|\vx_*\|)=10^{-6}$. Because the focus of this experiment is about the comparison between single and double precision, we do not estimate the condition number of $\ma$ and instead specify the precision of the preconditioner in advance. Figure~\ref{fig:100000GPUsingraw} shows the average runtime, in seconds, for double precision PNE, mixed precision PNE, and a QR based solver. Figure~\ref{fig:100000GPUspeedup} illustrates the speedup of double and mixed precision PNE, 
computed as the ratio of PNE time over QR solver time.

The mixed precision PNE can be faster than the double precision PNE, as illustrated by $n=400, 500,$ and $1000$ columns in Figure~\ref{fig:100000GPUspeedup}.  However, because linear solves dominate the operation count of the PNE over the preconditioner computation, the increase in speed is only modest. Nevertheless, the mixed precision approach offers the complementary benefit of a reduced memory footprint. For $n\leq 2000$ columns, mixed precision and double precision PNE are faster than the QR solver.
As the number of columns grows in Figure~\ref{fig:100000GPUsingraw}, the sketching process becomes more expensive and all three algorithms have similar speed. 

\textbf{Figure~\ref{fig:GPUcomparisonacc2}.} 
The matrices $\ma\in\rmn$ in Figure~\ref{fig:GPUcomparisonacc2} have $m=2^{17}$ rows, $400\leq n\leq 2000$ columns, and least squares residuals $\|\ma\vx_*-\vb\|/(\|\ma\|\|\vx_*\|)=10^{-6}$. In Figure \ref{fig:GPUacchlaf}, $\kappa(\ma)=100$ and in Figure~\ref{fig:GPUaccdouble}, $ \kappa(\ma)=10^{10}$. Figure~\ref{fig:GPUcomparisonacc2} shows that the automatic precision selection is accurate for matrices of varying conditioning levels. In Figure~\ref{fig:GPUacchlaf}, the half precision preconditioner is almost as accurate as the QR solver for a well conditioned matrix, and the double precision preconditioner in~\ref{fig:GPUaccdouble} is as accurate as the QR solver for an ill conditioned matrix.  

\begin{figure}[h!]
    \centering
    \begin{subfigure}[b]{0.48\linewidth}
        \centering
        \includegraphics[width=\linewidth]{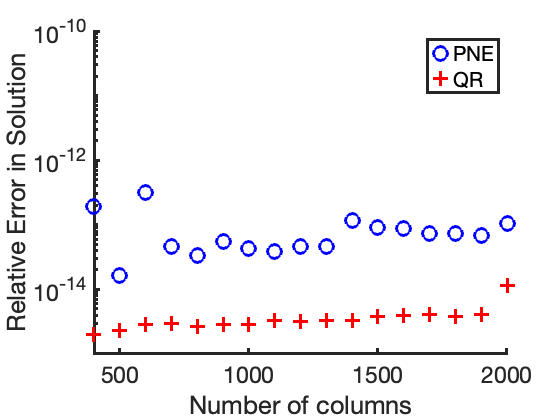}
        \caption{$\kappa(\ma)=10^2$ }
        \label{fig:GPUacchlaf}
    \end{subfigure}
    \hfill
    \begin{subfigure}[b]{0.48\linewidth}
        \centering
        \includegraphics[width=\linewidth]{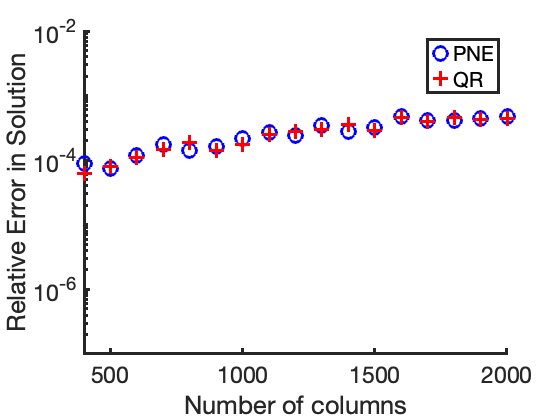}
        \caption{$\kappa(\ma)=10^{10}$ }
        \label{fig:GPUaccdouble}
    \end{subfigure}
    \caption{Relative errors in the computed solutions $\vhx$ and perturbation bounds versus number of columns $n$ for matrices with condition number $\kappa(\ma)=10^2$ (Figure~\ref{fig:GPUacchlaf}) and  $\kappa(\ma)=10^{10}$ (Figure~\ref{fig:GPUaccdouble}). Shown are the errors in the QR solver (red plusses); the errors from the PNE with an automatic selector for the precision of the preconditoner (blue circles).}
    \label{fig:GPUcomparisonacc2}
\end{figure}


\end{document}